\documentclass{article}%
\usepackage{amsmath}
\usepackage{amsfonts}
\usepackage{amssymb}
\usepackage{graphicx}
\usepackage{xcolor}%
\providecommand{\U}[1]{\protect\rule{.1in}{.1in}}
\newtheorem{theorem}{Theorem}

\newtheorem{definition}[theorem]{Definition}

\newtheorem{lemma}[theorem]{Lemma}

\newtheorem{problem}[theorem]{Problem}
\newtheorem{proposition}[theorem]{Proposition}
\newtheorem{remark}[theorem]{Remark}

\newenvironment{proof}[1][Proof]{\noindent\textbf{#1.} }{\ \rule{0.5em}{0.5em}}

\begin{document}

\title{On singular Galerkin discretizations for three models in high-frequency scattering}
\author{T. Chaumont-Frelet\thanks{(theophile.chaumont@inria.fr), Inria, Univ. Lille,
CNRS, UMR 8524 - Laboratoire Paul Painlev\'{e}, 59000 Lille, France}
\and S. Sauter\thanks{(stas@math.uzh.ch), Institut f\"{u}r Mathematik,
Universit\"{a}t Z\"{u}rich, Winterthurerstr 190, CH-8057 Z\"{u}rich,
Switzerland}}
\maketitle

\begin{abstract}
We consider three common mathematical models for time-harmonic high frequency
scattering: the Helmholtz equation in two and three spatial dimensions, a
transverse magnetic problem in two-dimensions, and Maxwell's equation in three
dimensions with dissipative boundary conditions such that the continuous
problem is well posed. In this paper, we construct meshes for popular (low
order) Galerkin finite element discretizations such that the discrete system
matrix becomes singular and the discrete problem is not well posed. This
implies that a condition \textquotedblleft the finite element space has to be
sufficiently rich\textquotedblright\ in the form of a \textit{resolution
condition} -- typically imposed for discrete well-posedness -- is not an
artifact from the proof by a compact perturbation argument but necessary for
discrete stability of the Galerkin discretization.

\end{abstract}

\noindent\emph{AMS Subject Classification: 65N12, 65N30, 35J25, 65F99}

\noindent\emph{Key Words: Helmholtz equation; Maxwell equation; high
wavenumber; pre-asymptotic stability; }$hp$\emph{-finite elements}

\section{Introduction}

The finite element discretization of partial differential equations (PDEs)
modeling time-harmonic wave propagation is a challenging problem in numerical
analysis. It is especially difficult to develop optimal stability and
convergence results because the natural variational formulations of such PDE
models are not coercive. In particular, this lack of coercivity leads to
linear systems with indefinite matrices, which must be numerically solved. It
is well-known that if the discretization space is sufficiently
\textquotedblleft rich\textquotedblright, the discretized problem has a unique
solution if the original PDE model is well-posed. However, what happens for
finite element spaces that are not large enough is less clear. In this work,
we construct meshes such the system matrices resulting from finite element
discretizations {are} singular for certain wavenumbers, even if the original
PDE model is well-posed.

Given a polytopic domain $\Omega\subset\mathbb{R}^{d}$ ($d=2$ or $3$) with
diameter $\ell$, {a} right-hand side in the dual energy space $f\in
\mathcal{H}(\Omega)^{\star}$, and {a wavenumber} $k>0$, we consider
time-harmonic wave propagation problems of the form: Find $u\in\mathcal{H}%
(\Omega)$ {such that}
\begin{equation}
-k^{2}(u,v)_{\Omega}-ik(\gamma_{\mathcal{D}}u,\gamma_{\mathcal{D}}%
v)_{\partial\Omega}+(\mathcal{D}u,\mathcal{D}v)_{\Omega}=\langle
f,v\rangle\qquad\forall v\in\mathcal{H}(\Omega). \label{eq_helmholtz_intro}%
\end{equation}
In~\eqref{eq_helmholtz_intro}, $\mathcal{D}$ is a differential operator from
the de Rham complex (i.e., the gradient, the curl or the divergence),
$\gamma_{\mathcal{D}}$ is the naturally associated trace operator, and
$\mathcal{H}(\Omega)$ is the corresponding Sobolev space. These notation are
rigorously introduced in Section~\ref{SecSetting} below.

The dissipation introduced on the boundary ensures
that~\eqref{eq_helmholtz_intro} is well-posed for all $k>0$. It is furthermore
possible to control the norm of the solution $u$ by the norm of the right-hand
side $f$ explicitly in $k$, see
e.g.~\cite{chaumontfrelet_moiola_spence_2023a,esterhazy_melenk_2011a,hiptmair_moiola_perugia_2010a,MelenkDiss,verfurth_2019a}%
. We focus on so-called \textquotedblleft Robin\textquotedblright\ boundary
conditions for simplicity in this introduction, but the results of this work
are valid for generic boundary conditions, including Dirichlet-to-Neumann
operator{s} arising from the Sommerfeld radiation
condition~\cite{chandlerwilde_monk_2008a,spence_2014a}.

We are interested in the conforming finite element method (FEM) applied
to~\eqref{eq_helmholtz_intro} based on a finite-dimensional subspace
$\mathcal{H}_{h}\subset\mathcal{H}(\Omega)$. In this setting, the goal is to
find a discrete approximation $u_{h}\in\mathcal{H}_{h}$ such that
\begin{equation}
-k^{2}(u_{h},v_{h})_{\Omega}-ik(\gamma_{\mathcal{D}}u_{h},\gamma_{\mathcal{D}%
}v_{h})_{\partial\Omega}+(\mathcal{D}u_{h},\mathcal{D}v_{h})_{\Omega}=\langle
f,v_{h}\rangle\qquad\forall v_{h}\in\mathcal{H}_{h}.
\label{eq_helmholtz_fem_intro}%
\end{equation}
This is a finite-dimensional linear system and, given a basis of
$\mathcal{H}_{h}$, the left-hand side of~\eqref{eq_helmholtz_fem_intro} can be
represented by the system matrix $A$.

Finite element spaces are based on a simplicial mesh $\mathcal{T}_{h}$ of
$\Omega$ and $\mathcal{H}_{h}$ collects all the elements of $\mathcal{H}%
(\Omega)$ whose restriction to each simplex belongs to a suitable polynomial
subspace $\widehat{P}$. This is detailed in Section~\ref{SecFEM}. The key
parameters controlling the design of the finite element space are the diameter
$h$ of the largest simplex in $\mathcal{T}_{h}$ and the maximal polynomial
degree $p$ of the elements of $\widehat{P}$. The quantity $kh/p$ is then
inversely proportional to the number of degrees of freedom per wavelength.

Following~\cite{bernkopf_chaumontfrelet_melenk_2025a,chaumontfrelet_2019a,chaumontfrelet_galkowski_spence_2024a,du_wu_2015_preasymptotic,lu_wu_2025_preasymptotic_maxwell,MelenkDiss,mm_stas_helm2,melenk_sauter_2024a,Schatz74}%
, there exist constants $\beta,c>0$ independent of $h$ and $k$ such that if
\begin{equation}
\frac{kh}{p}\leq c(k\ell)^{\beta} \label{eq_resolution_condition}%
\end{equation}
then the matrix $A$ is known to be regular, and the approximation $u_{h}$ to
$u$ is uniformly accurate for all $k$. The particular value of $\beta$ depends
on $p$ and the geometry of $\Omega$, as we recap in Section~\ref{SecFEM}. As a
general rule of thumb, the values of $\beta$ decreases with $p$, which makes
high-order methods more suitable. It is numerically observed that the
resolution condition in~\eqref{eq_resolution_condition} is necessary to obtain
accurate approximations. In fact, under appropriate assumptions, the optimal
value of $\beta$ is $1/(2p)$.
%This is known as the pollution effect.

In this work, we are interested in the properties of the matrix $A$ when no
resolution conditions as in~\eqref{eq_resolution_condition} are imposed.
Although such discretization does not lead to accurate approximations, it is
still of interest in the context of
preconditioning~\cite{erlangga_oosterlee_vuik_2006a} and
adaptivity~\cite{bespalov_haberl_praetorius_2017a}. Indeed, high accuracy is
not expected for the coarsest level of a multigrid solver nor for the initial
mesh of an adaptive algorithm performing iterative refinements. Rather, one
desires that the matrix $A$ possesses basic algebraic and analytical
properties and, in particular, that it is non-singular.

In \cite[Theorem 3.1]{bernkopf2021solvability} a positive result in one space
dimension has been proved: Any $hp$ finite element Galerkin discretization of
the one-dimensional Helmholtz equation with Robin boundary condition has a
unique solution for all wavenumbers $k\in\mathbb{R}\backslash\left\{
0\right\}  $ and all one-dimensionsal meshes.

In this paper we investigate the well-posedness of the Galerkin discretization
for time-harmonic high-frequency scattering problems in two and three
dimensions. The results we provide are of a negative nature. Specifically, we
construct a mesh $\widehat{\mathcal{T}}$ for which the system matrix $A$ is
singular for particular values of $k$. Our work is inspired
by~\cite{bernkopf2021solvability} where one mesh was presented such that the
lowest-order Lagrange finite element in two space dimensions becomes singular.
Here, we expand on~\cite{bernkopf2021solvability} and construct further meshes
such that the Galerkin discretization becomes singular, more precisely we show
that for the Galerkin method on these meshes for (a) two- and
three-dimensional settings, (b) Lagrange, Raviart--Thomas and N\'{e}d\'{e}lec
finite elements, and (c) larger polynomial degrees the arising linear system
can become singular.

Our examples do not only concern the particular mesh $\widehat{\mathcal{T}}$,
but in fact, any mesh $\mathcal{T}_{h}$ containing (a possibly scaled version
of) $\widehat{\mathcal{T}}$ as a submesh is affected. To be more specific, we
in fact construct finite element functions $u_{h}\in\mathcal{H}_{h}%
\setminus\{0\}$ with local support such that $(\mathcal{D}u_{h},\mathcal{D}%
v_{h})_{\Omega}=k^{2}(u_{h},v_{h})_{\Omega}$ for all $v_{h}\in\mathcal{H}_{h}%
$. This is a manifestation of the fact that the so-called \textquotedblleft
unique continuation principle\textquotedblright\ does not hold at the discrete
level~\cite{cox_maclachlan_steeves_2025a}. This is in contrast to the
continuous level, where such a principle is in fact used to establish the
well-posedness of~\eqref{eq_helmholtz_intro}, see
e.g.~\cite{burq_1998a,wolff_1992_UCP}.

{The numerical discretization of unique continuation problems is also of
independent interest (see e.g.~\cite{burman_delay_ern_2021a}), and we expect
that the present results might be of interest in this context.}

The remainder of this work is organized as follows. In Section
\ref{SecSetting}, three model problems for high-frequency scattering are
introduced on the continuous level and well-posedness results from the
literature are recalled. The Galerkin discretizations for these models by
popular finite element spaces are presented in Section \ref{SecFEM} and their
well-posedness is formulated provided the finite element space satisfies a
resolution condition which is stated in an explicit way.

In Section \ref{SecBern} we introduce the two basic meshes, one for two and
one for three dimensions, which are used to construct the singular
discretizations. They are inspired by the work \cite{bernkopf2021solvability}
and we denote them by \textit{Bernkopf meshes.} We assemble the system
matrices by an implementation\footnote{The source code of the
\textsc{Mathematica }notebook and data files are online available, Link:
https://drive.math.uzh.ch/index.php/s/daFi9g27CZ3eXmC} of our finite element
methods in the symbolic mathematics software \textsc{Mathematica}%
$^{\text{\texttrademark}}$ (see \cite{Mathematica}). The matrix entries then
are quadratic functions of the wavenumber $k$. To analyze the critical
wavenumbers leading to a singular system matrix we have computed its
determinant (a polynomial in $k$) and analyzed its roots. Here, we present the
results by explicitly characterizing all the critical frequencies $k$ for
these meshes and different examples.

In the concluding Section \ref{SecConcl} we summarize our findings and also
explain that general meshes which contain a (scaled and dilated) Bernkopf mesh
as a submesh lead to singular system matrices for scaled versions of the
critical wavenumbers $k$ in the basic Bernkopf meshes.

For the Maxwell equation, discretized by N\'{e}d\'{e}lec elements, the
analysis of the determinant of the system matrix involves a discussion of a
cubic polynomial with parameter-dependent coefficients. We have shifted this
tedious analysis to Appendix \ref{ApTech}.

\section{Setting\label{SecSetting}}

In this section we introduce our model scattering problems and their
discretization. The physical domain $\Omega\subset\mathbb{R}^{d}$,
$d\in\left\{  2,3\right\}  $ is assumed to be bounded {and Lipschitz}. Its
boundary is denoted by $\Gamma:=\partial\Omega$. and the outer normal vector
field by $\mathbf{n}$.

Let $L^{p}\left(  \Omega\right)  $, $p\in\left[  1,\infty\right]  $, and
$H^{1}\left(  \Omega\right)  $ be the standard Lebesgue and Sobolev spaces on
$\Omega$ and we denote by $\mathbf{L}^{p}\left(  \Omega\right)  :=\left(
L^{p}\left(  \Omega\right)  \right)  ^{d}$, $\mathbf{H}^{1}\left(
\Omega\right)  :=\left(  H^{1}\left(  \Omega\right)  \right)  ^{d}$ their
vector valued analogues. We will also need the spaces%
\begin{align*}
\mathbf{H}\left(  \operatorname*{div};\Omega\right)   &  :=\left\{
\mathbf{v}\in\mathbf{L}^{2}\left(  \Omega\right)  \mid\operatorname*{div}%
\mathbf{v}\in L^{2}\left(  \Omega\right)  \right\}  ,\\
\mathbf{H}\left(  \operatorname*{curl};\Omega\right)   &  :=\left\{
\mathbf{v}\in\mathbf{L}^{2}\left(  \Omega\right)  \mid\operatorname*{curl}%
\mathbf{v}\in\mathbf{L}^{2}\left(  \Omega\right)  \right\}
\end{align*}
and the space of tangential fields in $\mathbf{L}^{2}\left(  \Gamma\right)  $:%
\[
\mathbf{L}_{T}^{2}(\Gamma):=\left\{  \mathbf{v}\in\mathbf{L}^{2}(\Gamma
)\mid\mathbf{n}\cdot\mathbf{v}=0\text{ on }\Gamma\right\}  .
\]
For the spaces $H^{1}\left(  \Omega\right)  $, $\mathbf{H}\left(
\operatorname*{div};\Omega\right)  $, $\mathbf{H}\left(  \operatorname*{curl}%
;\Omega\right)  $ we introduce norms where the $L^{2}$ term is weighted by the
wavenumber $k\in\mathbb{R}\backslash\left\{  0\right\}  $%
\[%
\begin{tabular}
[c]{ll}%
$\left\Vert u\right\Vert _{\nabla,k}:=\left(  \left\Vert \nabla u\right\Vert
_{\mathbf{L}^{2}\left(  \Omega\right)  }^{2}+k^{2}\left\Vert u\right\Vert
_{L^{2}\left(  \Omega\right)  }^{2}\right)  ^{1/2}$ & for $H^{1}\left(
\Omega\right)  ,$\\
$\left\Vert \mathbf{u}\right\Vert _{\operatorname*{div},k}:=\left(  \left\Vert
\operatorname*{div}\mathbf{u}\right\Vert _{L^{2}\left(  \Omega\right)  }%
^{2}+k^{2}\left\Vert \mathbf{u}\right\Vert _{\mathbf{L}^{2}\left(
\Omega\right)  }^{2}\right)  ^{1/2}$ & for $\mathbf{H}\left(
\operatorname{div};\Omega\right)  ,$\\
$\left\Vert \mathbf{u}\right\Vert _{\operatorname*{curl},k}:=\left(
\left\Vert \operatorname*{curl}\mathbf{u}\right\Vert _{\mathbf{L}^{2}\left(
\Omega\right)  }^{2}+k^{2}\left\Vert \mathbf{u}\right\Vert _{\mathbf{L}%
^{2}\left(  \Omega\right)  }^{2}\right)  ^{1/2}$ & for $\mathbf{H}\left(
\operatorname*{curl};\Omega\right)  .$%
\end{tabular}
\ \ \
\]

\begin{problem}
[Helmholtz problem]\label{HelmProb}The weak formulation of the Helmholz
problem with Robin boundary condition is given by%
\begin{equation}
\text{find }u\in H^{1}\left(  \Omega\right)  \text{ s.t.}\quad a_{k}%
^{\operatorname*{H}}\left(  u,v\right)  =F\left(  v\right)  \quad\forall v\in
H^{1}\left(  \Omega\right)  , \label{Helm0}%
\end{equation}
for some given continuous anti-linear form $F\in\left(  H^{1}\left(
\Omega\right)  \right)  ^{\prime}$ and sesquilinear form $a_{k}%
^{\operatorname*{H}}:H^{1}\left(  \Omega\right)  \times H^{1}\left(
\Omega\right)  \rightarrow\mathbb{C}$ defined by%
\[
a_{k}^{\operatorname*{H}}\left(  u,v\right)  :=\left(  \nabla u,\nabla
v\right)  _{\mathbf{L}^{2}\left(  \Omega\right)  }-k^{2}\left(  u,v\right)
_{L^{2}\left(  \Omega\right)  }-\operatorname*{i}k\left(  u,v\right)
_{L^{2}\left(  \Gamma\right)  }.
\]

\end{problem}

Next, we consider the transverse magnetic problem in 2D with impedance
boundary condition. The \emph{energy space} for this problem is chosen to be%
\begin{equation}
\mathbf{V}_{\operatorname*{imp}}:=\left\{  \mathbf{u}\in\mathbf{H}\left(
\operatorname{div};\Omega\right)  \mathbf{\mid u}\cdot\mathbf{n}\in
L^{2}(\Gamma)\right\}
\end{equation}
and furnished {with} the norm%
\[
\left\Vert \mathbf{u}\right\Vert _{\operatorname*{div},k,+}:=\left(
\left\Vert \mathbf{u}\right\Vert _{\operatorname*{div},k}^{2}+\left\vert
k\right\vert \left\Vert \mathbf{u}\cdot\mathbf{n}\right\Vert _{L^{2}(\Gamma
)}^{2}\right)  ^{1/2}.
\]

\begin{problem}
[Transverse magnetic problem in 2D]\label{divdivProb}The weak form of the
transverse magnetic problem in two dimensions is given by%
\begin{equation}
\text{find }\mathbf{u}\in\mathbf{V}_{\operatorname*{imp}}\ \text{s.t.}\quad
a_{k}^{\operatorname*{M2D}}\left(  \mathbf{u},\mathbf{v}\right)  =F\left(
\mathbf{v}\right)  \quad\forall\mathbf{v}\in\mathbf{V}_{\operatorname*{imp}},
\label{TMP}%
\end{equation}
for some given continuous anti-linear form $F\in\mathbf{V}%
_{\operatorname*{imp}}^{\prime}$ and sesquilinear form $a_{k}%
^{\operatorname*{M2D}}:\mathbf{V}_{\operatorname*{imp}}\times\mathbf{V}%
_{\operatorname*{imp}}\rightarrow\mathbb{C}$ given by%
\[
a_{k}^{\operatorname*{M2D}}\left(  \mathbf{u},\mathbf{v}\right)  :=\left(
\operatorname*{div}\mathbf{u},\operatorname*{div}\mathbf{v}\right)
_{L^{2}\left(  \Omega\right)  }-k^{2}\left(  \mathbf{u},\mathbf{v}\right)
_{\mathbf{L}^{2}\left(  \Omega\right)  }-\operatorname*{i}k\left(
\mathbf{u}\cdot\mathbf{n},\mathbf{v}\cdot\mathbf{n}\right)  _{\mathbf{L}%
^{2}\left(  \Gamma\right)  }.
\]

\end{problem}

Finally, we consider the three-dimensional Maxwell equations with impedance
boundary condition. The energy space is chosen to be%
\begin{equation}
\mathbf{X}_{\operatorname*{imp}}:=\left\{  \mathbf{u}\in\mathbf{H}\left(
\operatorname*{curl};\Omega\right)  \mathbf{\mid u}_{T}\mathbf{\in L}_{T}%
^{2}(\Gamma)\right\}  , \label{eq:Ximp}%
\end{equation}
where $\mathbf{u}_{T}:=\mathbf{n}\times\left(  \left.  \mathbf{u}\right\vert
_{\Gamma}\times\mathbf{n}\right)  $; its norm is given by%
\[
\left\Vert \mathbf{u}\right\Vert _{\operatorname*{curl},k,+}:=\left(
\left\Vert \mathbf{u}\right\Vert _{\operatorname*{curl},k}^{2}+\left\vert
k\right\vert \left\Vert \mathbf{u}_{T}\right\Vert _{\mathbf{L}^{2}(\Gamma
)}^{2}\right)  ^{1/2}.
\]

\begin{problem}
[Maxwell's equation in 3D]\label{curlcurlProb}For some given continuous
anti-linear form $F\in\mathbf{X}_{\operatorname*{imp}}^{\prime}$, the weak
form of the Maxwell equation in 3D with impedance boundary condition is given
by%
\begin{equation}
\text{find }\mathbf{u}\in\mathbf{X}_{\operatorname*{imp}}\ \text{s.t.}\quad
a_{k}^{\operatorname*{M3D}}\left(  \mathbf{u},\mathbf{v}\right)  =F\left(
\mathbf{v}\right)  \quad\forall\mathbf{v}\in\mathbf{X}_{\operatorname*{imp}}
\label{Max0}%
\end{equation}
with sesquilinear form $a_{k}^{\operatorname*{M3D}}:\mathbf{X}%
_{\operatorname*{imp}}\times\mathbf{X}_{\operatorname*{imp}}\rightarrow
\mathbb{C}$ defined by%
\[
a_{k}^{\operatorname*{M3D}}\left(  \mathbf{u},\mathbf{v}\right)  =\left(
\operatorname*{curl}\mathbf{u},\operatorname{cul}\mathbf{v}\right)
_{\mathbf{L}^{2}\left(  \Omega\right)  }-k^{2}\left(  \mathbf{u}%
,\mathbf{v}\right)  _{\mathbf{L}^{2}\left(  \Omega\right)  }-\operatorname*{i}%
k\left(  \mathbf{u}_{T},\mathbf{v}_{T}\right)  _{\mathbf{L}^{2}\left(
\Gamma\right)  }.
\]

\end{problem}

These problems have in common that they are well posed for all wavenumbers
$k\in\mathbb{R}\backslash\left\{  0\right\}  $ and we cite here the relevant theorems.

\begin{remark}
For the well-posedness of the Galerkin discretization of these problems an
approximation property (see (\ref{adapproxHelm}), (\ref{adapproxMW2D}),
(\ref{delta_k_def})) will be key which involves the continuous solution
operator applied to functions in a space which is compactly embedded in the
dual energy space. This is why we define in the following the solution
operator for these subspaces.
\end{remark}

\begin{proposition}
[{\cite[Prop.~8.1.3]{MelenkDiss}}]\label{prop:solvability-robin} Let $\Omega$
be a bounded Lipschitz domain. Then, (\ref{Helm0}) is uniquely solvable for
all $F\in\left(  H^{1}\left(  \Omega\right)  \right)  ^{\prime}$ and the
solution depends continuously on the data.
\end{proposition}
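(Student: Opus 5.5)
The proof will follow the Fredholm alternative. First I will split the sesquilinear form as $a_k^{\operatorname*{H}}(u,v)=b(u,v)+c(u,v)$ with
\[
b(u,v):=(\nabla u,\nabla v)_{\mathbf{L}^{2}(\Omega)}+k^{2}(u,v)_{L^{2}(\Omega)},\qquad c(u,v):=-2k^{2}(u,v)_{L^{2}(\Omega)}-\operatorname*{i}k(u,v)_{L^{2}(\Gamma)}.
\]
The form $b$ is the inner product inducing $\Vert\cdot\Vert_{\nabla,k}$, so it is coercive and continuous on $H^{1}(\Omega)$. By Lax--Milgram (Riesz) it defines an isomorphism $B:H^{1}(\Omega)\to(H^{1}(\Omega))^{\prime}$.

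Next I will show that the operator $C:H^{1}(\Omega)\to(H^{1}(\Omega))^{\prime}$ induced by $c$ is compact. For the volume term this follows from the compact embedding $H^{1}(\Omega)\hookrightarrow L^{2}(\Omega)$ (Rellich, valid on bounded Lipschitz domains). For the boundary term it follows from the compactness of the trace $H^{1}(\Omega)\to L^{2}(\Gamma)$, which factors through $H^{1/2}(\Gamma)\hookrightarrow L^{2}(\Gamma)$; alternatively one can use $\Vert u\Vert_{L^2(\Gamma)}^2\lesssim \Vert u\Vert_{L^2}\Vert u\Vert_{H^1}$. Consequently $B+C$ is Fredholm of index zero, and it suffices to prove injectivity. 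Continuous dependence on the data then follows from the bounded inverse theorem.

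For uniqueness, let $u\in H^{1}(\Omega)$ satisfy $a_k^{\operatorname*{H}}(u,v)=0$ for all $v$. Testing with $v=u$ and taking imaginary parts gives $k\Vert u\Vert_{L^{2}(\Gamma)}^{2}=0$, so $u|_\Gamma=0$ because $k\neq0$. Substituting this back, $u$ satisfies $-\Delta u-k^{2}u=0$ weakly in $\Omega$ with $u=0$ on $\Gamma$. The variational equation with test functions in $H^1$, together with $u|_\Gamma=0$, also yields $\partial_{\mathbf n}u=0$ on $\Gamma$ in the weak ($H^{-1/2}$) sense.

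The main obstacle is concluding $u=0$ from vanishing Cauchy data on a merely Lipschitz boundary. My plan is to extend $u$ by zero to a function $\tilde u$ on a larger ball $D\supset\overline{\Omega}$. Because both the trace and the conormal derivative vanish, $\tilde u\in H^{1}(D)$ satisfies $-\Delta\tilde u-k^{2}\tilde u=0$ weakly in all of $D$: for $\varphi\in C_c^\infty(D)$ the integral over $\Omega$ vanishes by the equation with $v=\varphi|_\Omega\in H^1(\Omega)$. Elliptic regularity makes $\tilde u$ real-analytic in $D$. Since it vanishes on the nonempty open set $D\setminus\overline{\Omega}$, unique continuation (or simply analyticity on the connected set $D$) gives $\tilde u\equiv0$, hence $u=0$. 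Injectivity together with the Fredholm property then completes the proof.
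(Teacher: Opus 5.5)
Your proposal is correct and is essentially the standard argument behind the cited result from Melenk's thesis, to which the paper defers without giving its own proof. A coercive-plus-compact splitting reduces well-posedness to injectivity via the Fredholm alternative; the imaginary part then forces $u|_{\Gamma}=0$, extension by zero gives a function in $H^{1}(D)$ solving the Helmholtz equation on $D$ (trace-free functions on a Lipschitz domain lie in $H^{1}_{0}(\Omega)$), and unique continuation gives $u=0$, which is exactly the continuous unique continuation mechanism the paper's introduction invokes.
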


Proposition \ref{prop:solvability-robin} implies that the solution operator
$T_{k}^{\operatorname*{H}}:L^{2}\left(  \Omega\right)  \rightarrow
H^{1}\left(  \Omega\right)  $ is well defined {for $f\in L^{2}$} by%
\begin{equation}
a_{k}^{\operatorname*{H}}\left(  T_{k}^{\operatorname*{H}}f,v\right)  =\left(
f,v\right)  _{L^{2}\left(  \Omega\right)  }\quad\forall v\in H^{1}\left(
\Omega\right)  . \label{solopHelm}%
\end{equation}

\begin{proposition}
[{\cite[Thm. 3.1]{chaumontfrelet_2019a}}]\label{Lem:apriori:TME} Let $\Omega$
be a bounded Lipschitz domain. Then, (\ref{TMP}) is uniquely solvable for all
$F\in\mathbf{V}_{\operatorname*{imp}}^{\prime}$ and the solution depends
continuously on the data\footnote{Note that the general assumption that
$\Omega$ is a convex polygonal domain in \cite{chaumontfrelet_2019a} is not
needed in the proof of Theorem 3.1 therein.}.
\end{proposition}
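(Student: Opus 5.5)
The plan is to use the Fredholm alternative: first show that $a_k^{\operatorname*{M2D}}$ is a compact perturbation of a coercive form on $\mathbf{V}_{\operatorname*{imp}}$, then show uniqueness. Continuous dependence on the data then follows from the bounded inverse theorem.

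\emph{Step 1 (Helmholtz decomposition).} Every $\mathbf{u}\in\mathbf{V}_{\operatorname*{imp}}$ is split $L^2$-orthogonally as $\mathbf{u}=\nabla\varphi+\mathbf{w}$. The gradient part is determined by $\varphi\in H^1(\Omega)$ solving the Neumann-type problem $(\nabla\varphi,\nabla\psi)_\Omega=(\mathbf{u},\nabla\psi)_\Omega$. Since $\Delta\varphi=\operatorname*{div}\mathbf{u}\in L^2$ and $\partial_{\mathbf n}\varphi=\mathbf{u}\cdot\mathbf{n}\in L^2(\Gamma)$, Lipschitz-domain regularity (Jerison--Kenig / Neumann problem with $L^2$ data) gives $\nabla\varphi\in\mathbf{H}^{1/2}(\Omega)$. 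This embeds compactly in $\mathbf{L}^2(\Omega)$, and its normal trace lies in $L^2(\Gamma)$. The remainder $\mathbf{w}$ is divergence free with $\mathbf{w}\cdot\mathbf{n}=0$, so it lies in the kernel of the div part of the form. Hence $-k^2(\cdot,\cdot)$ is not compact on the whole space.

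\emph{Step 2 (T-coercivity).} Because of the kernel in Step 1, a plain Gårding inequality is unavailable, so I will use T-coercivity. Define $T\mathbf{u}:=\nabla\varphi-\mathbf{w}$. Then
\[
\operatorname{Re}a_k^{\operatorname*{M2D}}(\mathbf{u},T\mathbf{u})=\|\operatorname*{div}\mathbf{u}\|^2+k^2\|\mathbf{w}\|^2-k^2\|\nabla\varphi\|^2 .
\]
The boundary term only involves $\nabla\varphi\cdot\mathbf{n}$, and it drops out of the real part. The term $-k^2\|\nabla\varphi\|^2$ is compact thanks to the compact embedding from Step 1, so the form is coercive up to a compact perturbation. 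Fredholmness of index zero follows.

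\emph{Step 3 (Uniqueness).} Let $F=0$. Testing with $\mathbf{v}=\mathbf{u}$ and taking imaginary parts gives $k\|\mathbf{u}\cdot\mathbf{n}\|_{\Gamma}^2=0$, so $\mathbf{u}\cdot\mathbf{n}=0$ on $\Gamma$. Next, test with $\mathbf{v}\in\mathbf{H}_0(\operatorname*{div})$ and set $p:=\operatorname*{div}\mathbf{u}$. This gives $\nabla p=-k^2\mathbf{u}$, so $p\in H^1$. Taking the divergence gives $-\Delta p=k^2p$, and the boundary conditions are $\partial_{\mathbf n}p=-k^2\mathbf{u}\cdot\mathbf{n}=0$.

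Testing the original equation with general $\mathbf{v}$ and integrating by parts yields $p=0$ on $\Gamma$, because the impedance term is weighted against $\mathbf{v}\cdot\mathbf{n}$ and $\mathbf{u}\cdot\mathbf{n}=0$. Thus $p$ has vanishing Cauchy data. Extending $p$ by zero gives a Helmholtz solution on a larger domain that vanishes on an open set. Unique continuation then forces $p\equiv0$, so $\mathbf{u}=-k^{-2}\nabla p=0$.

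\emph{Main obstacle.} I expect the main obstacle to be Step 1 on merely Lipschitz $\Omega$. That step requires the regularity $\nabla\varphi\in\mathbf{H}^{1/2}$ and the compactness of $\{\mathbf{u}\in\mathbf{H}(\operatorname*{div}):\operatorname*{curl}$-free$,\ \mathbf{u}\cdot\mathbf{n}\in L^2(\Gamma)\}$ into $\mathbf{L}^2$. I would first try to cite Costabel's result for the Neumann problem on Lipschitz domains to obtain this.
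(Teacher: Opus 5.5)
The paper gives no proof of this proposition; it only imports Thm.~3.1 of \cite{chaumontfrelet_2019a}. Your argument therefore has to stand on its own, and with two small repairs it does.

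The $L^2$-orthogonal splitting makes the form block diagonal, and $T$ is an isometric involution of $\mathbf{V}_{\operatorname*{imp}}$, since it preserves $\operatorname{div}\mathbf{u}$, $\mathbf{u}\cdot\mathbf{n}$ and $\|\mathbf{u}\|$. Your uniqueness step is correct: the Cauchy data of $p=\operatorname{div}\mathbf{u}$ vanish, you extend by zero, and unique continuation finishes it.

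\emph{First repair.} The real part of $a_k^{\operatorname*{M2D}}(\mathbf{u},T\mathbf{u})$ does not see the term $|k|\,\|\mathbf{u}\cdot\mathbf{n}\|^2_{L^2(\Gamma)}$ of the $\mathbf{V}_{\operatorname*{imp}}$-norm, and that term cannot be absorbed into the compact part. Use the modulus instead. The quantity $a_k^{\operatorname*{M2D}}(\mathbf{u},T\mathbf{u})+2k^2\|\nabla\varphi\|^2=\|\operatorname{div}\mathbf{u}\|^2+k^2\|\mathbf{u}\|^2-\operatorname*{i}k\|\mathbf{u}\cdot\mathbf{n}\|^2_{L^2(\Gamma)}$ has modulus at least $2^{-1/2}\|\mathbf{u}\|^2_{\operatorname*{div},k,+}$.

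\emph{Second repair.} To conclude $p|_\Gamma=0$ you need the normal traces of $\mathbf{V}_{\operatorname*{imp}}$ to exhaust $L^2(\Gamma)$. For a given $g\in L^2(\Gamma)$, take $\mathbf{v}=\nabla\psi$ with $-\Delta\psi+\psi=0$ and $\partial_{\mathbf{n}}\psi=g$.

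Your ``main obstacle'' is in fact not one. Normalize $\varphi$ to have mean zero and apply $\|\nabla\varphi\|^2=-(\operatorname{div}\mathbf{u},\varphi)_{L^2(\Omega)}+(\mathbf{u}\cdot\mathbf{n},\varphi)_{L^2(\Gamma)}$ to differences along a bounded sequence. Compactness of $H^1(\Omega)\hookrightarrow L^2(\Omega)$ and of the trace $H^1(\Omega)\to L^2(\Gamma)$ then shows that $\mathbf{u}\mapsto\nabla\varphi$ is compact into $\mathbf{L}^2(\Omega)$. No Jerison--Kenig or Costabel regularity is needed, although the $\mathbf{H}^{1/2}$ claim you make is true.

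There is a natural alternative route. For $\mathbf{L}^2$ data $\mathbf{f}$, $p=\operatorname{div}\mathbf{u}$ solves Problem~\ref{HelmProb} with right-hand side $q\mapsto-(\mathbf{f},\nabla q)$, and $\mathbf{u}=-k^{-2}(\nabla p+\mathbf{f})$. Existence then holds by construction, and any $k$-explicit stability of the scalar Robin problem (Proposition~\ref{prop:solvability-robin} and its quantitative versions) is inherited. Your Fredholm/$T$-coercivity route instead treats arbitrary $F\in\mathbf{V}_{\operatorname*{imp}}'$ directly on any bounded Lipschitz domain, using nothing beyond unique continuation. Its cost is that the bound on the inverse is purely qualitative, with no information on its $k$-dependence. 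For the statement as written, which asserts only qualitative well-posedness, that is enough.
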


If the assumptions in Proposition \ref{Lem:apriori:TME} are satisfied, the
solution operator $T_{k}^{\operatorname*{M2D}}:\mathbf{L}^{2}\left(
\Omega\right)  \rightarrow\mathbf{H}\left(  \operatorname*{div};\Omega\right)
$ is well defined by%
\[
a_{k}^{\operatorname*{M2D}}\left(  T_{k}^{\operatorname*{M2D}}\mathbf{f}%
,\mathbf{v}\right)  =\left(  \mathbf{f},\mathbf{v}\right)  _{\mathbf{L}%
^{2}\left(  \Omega\right)  }\quad\forall\mathbf{v}\in\mathbf{H}^{1}\left(
\operatorname*{div};\Omega\right)
\]
{for all $\mathbf{f}\in\mathbf{L}^{2}$}.

Well-posedness of the Maxwell problem with impedance condition is proved in
\cite[Thm.~{4.17}]{Monkbook}.

\begin{proposition}
\label{lemma:apriori-homogeneous-rhs}Let $\Omega\subset\mathbb{R}^{3}$ be a
bounded Lipschitz domain with simply connected and sufficiently smooth
boundary. Then, (\ref{Max0}) is uniquely solvable for all $F\in\mathbf{X}%
_{\operatorname*{imp}}^{\prime}$ and the solution depends continuously on the data.
\end{proposition}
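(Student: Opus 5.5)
We prove Proposition~\ref{lemma:apriori-homogeneous-rhs} with the Fredholm alternative, combining a Helmholtz decomposition with uniqueness from the boundary dissipation and unique continuation.

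\textbf{Step 1 (uniqueness).} Let $\mathbf{u}\in\mathbf{X}_{\operatorname*{imp}}$ satisfy $a_{k}^{\operatorname*{M3D}}(\mathbf{u},\mathbf{v})=0$ for all $\mathbf{v}$. Testing with $\mathbf{v}=\mathbf{u}$ and taking the imaginary part gives $k\Vert\mathbf{u}_{T}\Vert_{\mathbf{L}^{2}(\Gamma)}^{2}=0$, so $\mathbf{u}_{T}=0$. Hence $\mathbf{n}\times\mathbf{u}=0$ on $\Gamma$, and the variational equation, read with arbitrary tangential test traces, also gives $\mathbf{n}\times\operatorname*{curl}\mathbf{u}=0$ on $\Gamma$. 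Thus $\mathbf{u}$ solves $\operatorname*{curl}\operatorname*{curl}\mathbf{u}-k^{2}\mathbf{u}=0$ in $\Omega$ with vanishing Cauchy data. Extend $\mathbf{u}$ by zero to a ball $B\supset\overline{\Omega}$. The matching tangential traces make the extension an $\mathbf{H}(\operatorname*{curl})$ weak solution of the same equation in $B$. Because $k\neq0$, this solution is divergence free and satisfies $-\Delta\mathbf{u}=k^{2}\mathbf{u}$ componentwise, so it is real analytic in $B$. Since it vanishes on the open set $B\setminus\overline{\Omega}$, unique continuation gives $\mathbf{u}\equiv0$.

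\textbf{Step 2 (Fredholm structure).} Decompose $\mathbf{X}_{\operatorname*{imp}}=\nabla S\oplus\mathbf{W}$. Here $S=\{p\in H^{1}(\Omega):p|_{\Gamma}\in H^{1}(\Gamma)\}$, and $\mathbf{W}$ is the $a$-orthogonal complement, i.e.\ the set of fields with $-k^{2}(\mathbf{w},\nabla q)-\operatorname*{i}k(\mathbf{w}_{T},\nabla_{\Gamma}q)_{\Gamma}=0$ for all $q\in S$.
\begin{itemize}
\item On $\nabla S$ the form reduces to $-k^{2}\Vert\nabla p\Vert^{2}-\operatorname*{i}k\Vert\nabla_{\Gamma}p\Vert_{\Gamma}^{2}$, which is coercive up to a complex rotation.
\item On $\mathbf{W}$, add the compact term $(1+k^{2})(\mathbf{u},\mathbf{v})$. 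This leaves a coercive form plus a compact perturbation, because $\mathbf{W}$ embeds compactly into $\mathbf{L}^{2}(\Omega)$.
\end{itemize}

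\textbf{Step 3 (existence and stability).} By Steps 1 and 2, the Fredholm alternative yields existence, and the bounded inverse theorem yields continuous dependence on $F$.

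The main obstacle is the compact embedding $\mathbf{W}\hookrightarrow\mathbf{L}^{2}$. Elements of $\mathbf{W}$ have a weak divergence tied to a boundary term, and their tangential traces are only in $\mathbf{L}^{2}(\Gamma)$. This is where the hypotheses of a simply connected, sufficiently smooth boundary enter: they exclude harmonic fields and give $\mathbf{H}^{1/2}$-type regularity. I would follow the argument of \cite[Thm.~4.17]{Monkbook}, which handles exactly this embedding, and cite it for the details.
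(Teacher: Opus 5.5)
Your sketch is correct and is essentially the argument of \cite[Thm.~4.17]{Monkbook}, which is all the paper relies on here (it gives no proof, only this citation). That argument has two parts: uniqueness from the boundary dissipation combined with zero extension and unique continuation, and existence from the splitting $\nabla S\oplus\mathbf{W}$, the compact embedding of $\mathbf{W}$ into $\mathbf{L}^{2}(\Omega)$, and the Fredholm alternative. One detail is worth stating explicitly: your $\mathbf{W}$ is $a$-orthogonal to $\nabla S$ on one side only ($a(\mathbf{w},\nabla q)=0$, but in general $a(\nabla p,\mathbf{w})\neq0$), so the operator is block-triangular rather than block-diagonal. This still gives Fredholm index zero, and injectivity on $\mathbf{W}$ follows from your Step 1 because $a(\mathbf{w},\cdot)$ then vanishes on all of $\mathbf{X}_{\operatorname*{imp}}$.
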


We introduce the space%
\[
\mathbf{V}_{k,0}:=\left\{  \mathbf{v}\in\mathbf{X}_{\operatorname*{imp}}\mid
k^{2}\left(  \mathbf{v},\nabla\varphi\right)  _{\mathbf{L}^{2}\left(
\Omega\right)  }+\operatorname*{i}k\left(  \mathbf{v}_{T},\left(
\nabla\varphi\right)  _{T}\right)  _{\mathbf{L}^{2}\left(  \Gamma\right)
}=0\quad\forall\varphi\in H_{\operatorname*{imp}}^{1}\left(  \Omega\right)
\right\}  ,
\]
where%
\[
H_{\operatorname*{imp}}^{1}(\Omega):=\left\{  \varphi\in H^{1}(\Omega
)\mid\left.  \varphi\right\vert _{\Gamma}\in H^{1}(\Gamma)\right\}  .
\]

The assumptions of Proposition \ref{lemma:apriori-homogeneous-rhs} imply that
the solution operator $T_{k}^{\operatorname*{M3D}}:\mathbf{V}_{k,0}%
\rightarrow\mathbf{X}_{\operatorname*{imp}}$ is well defined by%
\[
a_{k}^{\operatorname*{M3D}}\left(  T_{k}^{\operatorname*{M3D}}\mathbf{f}%
,\mathbf{v}\right)  =\left(  \mathbf{f},\mathbf{v}\right)  _{\mathbf{L}%
^{2}\left(  \Omega\right)  }\quad\forall\mathbf{v}\in\mathbf{X}%
_{\operatorname*{imp}}%
\]
{for all $\mathbf{f}\in\mathbf{V}_{k,0}$.}

\section{The finite element spaces\label{SecFEM}}

A Galerkin method is employed to discretize these equations which will be
based on common finite element spaces on conforming (no hanging nodes)
simplicial meshes $\mathcal{T}$ for $\Omega$ that satisfy

\begin{enumerate}
\item The (closed) elements $K\in{\mathcal{T}}$ cover $\Omega$, i.e.,
$\overline{\Omega}=\cup_{K\in{\mathcal{T}}}K$.

\item Associated with each element $K$ is the \emph{element map}, a $C^{1}%
$-diffeomorphism $\phi_{K}:\widehat{K}\rightarrow K$. The set $\widehat{K}$ is
the \emph{reference simplex}
\[
\widehat{K}:=\left\{  \mathbf{x}\in\mathbb{R}_{\geq0}^{d}\mid\sum_{i=1}%
^{d}x_{i}\leq1\right\}  .
\]

\item Denoting $h_{K}=\operatorname*{diam}K$, there holds, with some
\emph{shape-regularity constant $\gamma_{\mathcal{T}}$},
\begin{equation}
h_{K}^{-1}\left\Vert \phi_{K}^{\prime}\right\Vert _{\mathbf{L}^{\infty}\left(
\widehat{K}\right)  }+h_{K}\left\Vert (\phi_{K}^{\prime})^{-1}\right\Vert
_{\mathbf{L}^{\infty}\left(  \widehat{K}\right)  }\leq\gamma_{\mathcal{T}}.
\label{defhkloc}%
\end{equation}

\item The intersection of two elements is either empty, a vertex, an edge, a
face (for $d=3$), or they coincide (here, vertices, edges, and faces are the
images of the corresponding entities on the reference tetrahedron
$\widehat{K}$).

\item The parametrization of common inner edges (2D) and faces (3D) is
compatible: their pullbacks by $\phi_{K}^{-1}$ and $\phi_{K^{\prime}}^{-1}$
for two adjacent elements $K$, $K^{\prime}$ are affine equivalent.
\end{enumerate}

\begin{remark}
The construction of simplicial finite element meshes that satisfy the above
assumption can {be} based on patchwise structured meshes as described, for
example, in \cite[Ex.~{5.1}]{MelenkSauterMathComp} or \cite[Sec.~{3.3.2}%
]{MelenkHabil}.
\end{remark}

Next, we recall the definition of conforming finite element spaces for the
discretization of Problems \ref{HelmProb} - \ref{curlcurlProb}. For a simplex
$K\in\mathcal{T}$, let $\mathbb{P}_{p}\left(  K\right)  $ denote the space of
$d-$variate polynomials of total degree $\leq p$.

\begin{definition}
[Conforming \textit{hp} finite element space]Let $\mathcal{T}$ be a conforming
finite element mesh for the domain $\Omega$. Then the \emph{conforming }%
$hp$\emph{ finite element space} $S_{p}\left(  \mathcal{T}\right)  \subset
H^{1}\left(  \Omega\right)  $ is given by%
\[
S_{p}\left(  \mathcal{T}\right)  :=\left\{  u\in H^{1}\left(  \Omega\right)
\mid\forall K\in\mathcal{T}\quad\left.  u\right\vert _{K}\circ\phi_{K}%
\in\mathbb{P}_{p}\left(  K\right)  \right\}  .
\]

\end{definition}

We employ this space for the discretization of Problem \ref{HelmProb}:%
\begin{equation}
\text{find }u_{S}\in S_{p}\left(  \mathcal{T}\right)  \ \text{s.t.}\quad
a_{k}^{\operatorname*{H}}\left(  u_{S},v\right)  =F\left(  v\right)
\quad\forall v\in S_{p}\left(  \mathcal{T}\right)  . \label{DiscHelm}%
\end{equation}

\begin{definition}
[Raviart-Thomas elements]On the reference element the \emph{Raviart-Thomas
element} of order $p$ is given by%
\[
\mathbf{RT}_{p}(\widehat{K}):=\left\{  \mathbf{p}\left(  \mathbf{x}\right)
+\mathbf{x}{q}\left(  \mathbf{x}\right)  \,|\,\mathbf{p}\in\left(
\mathbb{P}_{p}\left(  \widehat{K}\right)  \right)  ^{3},{q}\in\mathbb{P}%
_{p}\left(  \widehat{K}\right)  \right\}  .
\]
For a conforming simplicial finite element mesh $\mathcal{T}$ of $\Omega$, the
\emph{Raviart-Thomas finite element} of order $p$ is the space:%
\[
\mathbf{RT}_{p}\left(  {\mathcal{T}}\right)  :=\left\{  \mathbf{u}%
\in\mathbf{H}\left(  \operatorname{div};\Omega\right)  \mid\left(
{\operatorname*{det}\phi_{K}^{\prime}}\right)  \left(  \phi_{K}^{\prime
}\right)  ^{-1}\mathbf{u}|_{K}\circ\phi_{K}\in\mathbf{RT}_{p}\left(
\widehat{K}\right)  \right\}  .
\]

\end{definition}

The Raviart-Thomas finite element is employed for the discretization of
Problem \ref{divdivProb}:%
\begin{equation}
\text{find }\mathbf{u}_{\operatorname*{RT}}\in\mathbf{RT}_{p}\left(
{\mathcal{T}}\right)  \ \text{s.t.}\quad a_{k}^{\operatorname*{M2D}}\left(
\mathbf{u}_{\operatorname*{RT}},\mathbf{v}\right)  =F\left(  \mathbf{v}%
\right)  \quad\forall\mathbf{v}\in\mathbf{RT}_{p}\left(  {\mathcal{T}}\right)
. \label{DiscM2D}%
\end{equation}

\begin{definition}
[N\'{e}d\'{e}lec elements]On the reference element $\widehat{K}$ the
\emph{N\'{e}d\'{e}lec (type I) element} of order $p$ is given by%
\[
\mathbf{N}_{p}^{\operatorname*{I}}\left(  \widehat{K}\right)  :=\left\{
\mathbf{p}\left(  \mathbf{x}\right)  +s\mathbf{x}\times\mathbf{q}\left(
\mathbf{x}\right)  \mid\mathbf{p},\mathbf{q}\in\left(  \mathbb{P}_{p}\left(
\widehat{K}\right)  \right)  ^{3}\right\}  .
\]
For a \emph{conforming simplicial finite element mesh} $\mathcal{T}$ of
$\Omega$, the \emph{N\'{e}d\'{e}lec (type I) finite element} of order $p$ is
the space:%
\[
\mathbf{N}_{p}^{\operatorname*{I}}\left(  {\mathcal{T}}\right)  :=\left\{
\mathbf{u}\in\mathbf{H}\left(  \operatorname*{curl};\Omega\right)  \mid\left(
\phi_{K}^{\prime}\right)  ^{T}\mathbf{u}|_{K}\circ\phi_{K}\in\mathbf{N}%
_{p}^{\operatorname*{I}}\left(  \widehat{K}\right)  \right\}  .
\]

\end{definition}

We consider discretizations of Problem \ref{curlcurlProb} by a Galerkin method
based on the N\'{e}d\'{e}lec space $\mathbf{N}_{p}^{\operatorname*{I}}\left(
{\mathcal{T}}\right)  :$%
\begin{equation}
\text{find }\mathbf{u}_{\operatorname*{N}}\in\mathbf{N}_{p}^{\operatorname*{I}%
}\left(  {\mathcal{T}}\right)  \ \text{s.t.}\quad a_{k}^{\operatorname*{M3D}%
}\left(  \mathbf{u}_{\operatorname*{N}},\mathbf{v}\right)  =F\left(
\mathbf{v}\right)  \quad\forall\mathbf{v}\in\mathbf{N}_{p}^{\operatorname*{I}%
}\left(  {\mathcal{T}}\right)  . \label{NdecurlcurlGal}%
\end{equation}

Since the original variational problems (Prob. \ref{HelmProb} --
\ref{curlcurlProb}) are not coercive the well-posedness of the Galerkin
discretization is not simply inherited. The common theory of well-posedness
for the discretizations is based on a compact perturbation argument which is
known as the \textquotedblleft Schatz argument\textquotedblright\ (see
\cite{Schatz74}); for Maxwell's equation the argument is more subtle since
$\mathbf{H}\left(  \operatorname*{curl},\Omega\right)  $ is not compactly
embedded in $\mathbf{L}^{2}\left(  \Omega\right)  $. In any case the
well-posedness theorem for conforming Galerkin discretizations of the original
variational problem is typically of the form: if the finite element space is
\textquotedblleft rich enough\textquotedblright, then the discrete equations
are well-posed. We cite here the relevant theorems but first introduce some notation.

\begin{proposition}
[{\cite[Thm. 3.2]{mm_stas_helm2}}]Let the simplicial finite element mesh
$\mathcal{T}$ and the polynomial degree be chosen such that the adjoint
approximation property (cf. (\ref{solopHelm}))%
\begin{equation}
\eta\left(  S_{p}\left(  \mathcal{T}\right)  \right)  :=\sup_{f\in
L^{2}\left(  \Omega\right)  \backslash\left\{  0\right\}  }\inf_{v\in
S_{p}\left(  \mathcal{T}\right)  }\frac{\left\Vert T_{-k}^{\operatorname*{H}%
}f-v\right\Vert _{\mathcal{H}}}{\left\Vert f\right\Vert _{L^{2}\left(
\Omega\right)  }} \label{adapproxHelm}%
\end{equation}
satisfies%
\[
C_{\operatorname{c}}\eta\left(  S_{p}\left(  \mathcal{T}\right)  \right)
<\frac{1}{2}.
\]
Then the Galerkin discretization (\ref{DiscHelm}) of Problem \ref{HelmProb} is well-posed.
\end{proposition}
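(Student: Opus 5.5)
The plan is the classical Schatz duality argument. Since the problem is finite dimensional, it suffices to show that the discrete operator is injective, and for that I will prove an a priori bound $\|u_S\|_{\mathcal{H}}\le C\sup_{v}|a_k^{\operatorname*{H}}(u_S,v)|/\|v\|_{\mathcal{H}}$ on $S_p(\mathcal{T})$. I take $\|\cdot\|_{\mathcal{H}}=\|\cdot\|_{\nabla,k}$, with $C_{\operatorname{c}}$ the continuity constant of $a_k^{\operatorname*{H}}$ in this norm; this constant is uniform in $k$ by the multiplicative trace inequality.

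The first step is a Gårding inequality. Adding $2k^2\|u\|^2_{L^2}$ to $\operatorname{Re}a_k^{\operatorname*{H}}(u,u)$ gives
\[
\operatorname{Re}a_k^{\operatorname*{H}}(u,u)+2k^{2}\Vert u\Vert_{L^{2}(\Omega)}^{2}=\Vert u\Vert_{\nabla,k}^{2},
\]
because the boundary term $-\operatorname*{i}k\|u\|^2_{L^2(\Gamma)}$ is purely imaginary and drops out of the real part. So it remains to control $k\|u_S\|_{L^2}$ by a small multiple of $\|u_S\|_{\mathcal{H}}$ plus the residual.

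The second step is the duality argument, which uses the adjoint solution operator. I set $f=u_S$ and $z=T_{-k}^{\operatorname*{H}}(k^{2}u_S)$. With the sesquilinearity convention of the paper, $a_k^{\operatorname*{H}}(w,z)=\overline{a_{-k}^{\operatorname*{H}}(z,w)}$, so $a_k^{\operatorname*{H}}(w,z)=k^2(w,u_S)_{L^2}$ for all $w\in H^1(\Omega)$. Choosing $w=u_S$ gives $k^2\|u_S\|^2_{L^2}=a_k^{\operatorname*{H}}(u_S,z)$. For an arbitrary $v\in S_p(\mathcal{T})$ I then write
\[
k^{2}\Vert u_S\Vert^{2}_{L^2}=a_k^{\operatorname*{H}}(u_S,z-v)+a_k^{\operatorname*{H}}(u_S,v).
\]
By continuity and the definition (\ref{adapproxHelm}), and taking $v$ as the near-best approximation, the first term is bounded by $C_{\operatorname{c}}\|u_S\|_{\mathcal{H}}\,\eta(S_p(\mathcal{T}))\,k^2\|u_S\|_{L^2}$. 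I also need $\|v\|_{\mathcal{H}}\lesssim\|z\|_{\mathcal{H}}+\eta k^2\|u_S\|_{L^2}$, which requires the stability bound of Proposition \ref{prop:solvability-robin}.

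The third step is absorption. In the homogeneous case, i.e. when the Galerkin residual vanishes, the second term is zero, and I get $k^2\|u_S\|^2_{L^2}\le C_{\operatorname{c}}\eta\,\|u_S\|_{\mathcal{H}}\,k^2\|u_S\|_{L^2}$, hence $k\|u_S\|_{L^2}\le C_{\operatorname{c}}\eta\,k^{-1}\cdot k\ldots$. To make the scaling come out right, I would normalize $\eta$ so that it measures $\|T f\|$ against $k\|f\|$; with that normalization, $k\|u_S\|_{L^2}\le C_{\operatorname{c}}\eta\|u_S\|_{\mathcal{H}}$. Inserting this into the Gårding identity gives $\|u_S\|^2_{\mathcal{H}}\le 2(C_{\operatorname{c}}\eta)^2\|u_S\|^2_{\mathcal{H}}$. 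The condition $C_{\operatorname{c}}\eta<1/2$ makes $2(C_{\operatorname{c}}\eta)^2<1/2$, so $u_S=0$.

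Injectivity then gives well-posedness, since the system is square. The inhomogeneous version of the same estimate yields the discrete inf-sup bound. The main obstacle is bookkeeping rather than substance: I need to get the $k$-scaling in the definition of $\eta$ and the adjoint relation $a_k^{\operatorname*{H}}(w,T_{-k}^{\operatorname*{H}}f)=(w,f)$ exactly consistent with the sign conventions of the sesquilinear form.
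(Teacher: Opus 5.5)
Your argument is the standard Schatz duality argument, which is how the cited result \cite[Thm.~3.2]{mm_stas_helm2} is proved; the paper itself gives no proof beyond the citation. Your G\aa rding identity, the relation $a_k^{\operatorname*{H}}(w,T_{-k}^{\operatorname*{H}}g)=(w,g)_{L^2(\Omega)}$, and the duality step are all correct. The genuine gap is the step you dismiss as bookkeeping. With $\eta$ exactly as defined in (\ref{adapproxHelm}), your chain yields $\|u_S\|_{L^2(\Omega)}\le C_{\operatorname{c}}\eta\,\|u_S\|_{\mathcal{H}}$. Hence $\|u_S\|_{\mathcal{H}}^2=2k^2\|u_S\|_{L^2(\Omega)}^2\le 2(C_{\operatorname{c}}k\eta)^2\|u_S\|_{\mathcal{H}}^2$. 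So what closes the argument is $C_{\operatorname{c}}k\,\eta(S_p(\mathcal{T}))<1/2$ (even $\sqrt{2}\,C_{\operatorname{c}}k\eta<1$ suffices), which is the hypothesis $2C_{\operatorname{c}}k\eta(S)\le 1$ of the original source. ``Normalizing $\eta$'' therefore does not prove the printed statement; it replaces its hypothesis by this stronger one. You also state the normalization backwards: measuring $\|T_{-k}^{\operatorname*{H}}f-v\|_{\mathcal{H}}$ against $k\|f\|_{L^2(\Omega)}$ produces $\eta/k$. You need $k\eta$, i.e.\ measurement against $\|f\|_{L^2(\Omega)}/k$.

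No argument can remove the factor $k$, because the printed condition $C_{\operatorname{c}}\eta<1/2$ is not scale invariant and is in fact not sufficient. Replace $(\Omega,\mathcal{T},k)$ by $(s\Omega,s\mathcal{T},k/s)$ and pull back via $u\mapsto u(\cdot/s)$. The sesquilinear form is multiplied by $s^{d-2}$, so singularity of the Galerkin matrix and $C_{\operatorname{c}}$ are unchanged. A direct computation with the solution operator gives $\eta\mapsto s\eta$, so $\eta$ has the dimension of a length. It is finite, since taking $v=0$ gives $\eta\le\|T_{-k}^{\operatorname*{H}}\|_{L^2(\Omega)\to\mathcal{H}}$. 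Applied with small $s$ to the singular Bernkopf examples of Section~\ref{SecBern}, this yields singular Galerkin discretizations satisfying $C_{\operatorname{c}}\eta<1/2$. So the proposition must be stated with $C_{\operatorname{c}}k\,\eta(S_p(\mathcal{T}))<1/2$, and under that hypothesis your proof is complete. The stability bound on $\|v\|_{\mathcal{H}}$ you mention is needed only for the inhomogeneous inf-sup estimate, not for injectivity.
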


For the transverse magnetic problem in 2D with impedance boundary condition we
quote a result in \cite[Thm. 4.1]{chaumontfrelet_2019a} for lowest order
Raviart-Thomas elements which also shows for this equation that if the maximal
mesh width $h$ of the shape regular triangulation $\mathcal{T}$ for
$\mathbf{RT}_{1}\left(  {\mathcal{T}}\right)  $ is small enough, the Galerkin
discretization is well posed.

\begin{proposition}
Let $\Omega\subset\mathbb{R}^{2}$ be a convex bounded polygonal domain. If
\begin{equation}
k^{2}h\text{ is sufficiently small} \label{adapproxMW2D}%
\end{equation}
then the Galerkin discretization (\ref{DiscM2D}) of Problem \ref{divdivProb}
is well posed.
\end{proposition}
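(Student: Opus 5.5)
The argument is a Schatz-type compact perturbation argument adapted to the $\operatorname{div}$-setting, in the spirit of \cite{chaumontfrelet_2019a}. It has three ingredients: a discrete G\aa rding inequality, a duality argument for the $L^{2}$-part, and a smallness condition that absorbs the indefinite term.

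\emph{G\aa rding inequality and Helmholtz splitting.} For $\mathbf{u}_{h}\in\mathbf{RT}_{1}(\mathcal{T})$ we have
\[
\operatorname{Re}a_{k}^{\operatorname*{M2D}}(\mathbf{u}_{h},\mathbf{u}_{h})+2k^{2}\Vert\mathbf{u}_{h}\Vert^{2}=\Vert\mathbf{u}_{h}\Vert_{\operatorname*{div},k}^{2}.
\]
The imaginary part controls $k\Vert\mathbf{u}_{h}\cdot\mathbf{n}\Vert_{L^{2}(\Gamma)}^{2}$. The difficulty is that $\mathbf{H}(\operatorname{div})$ does not embed compactly in $\mathbf{L}^{2}$, so $k^{2}\Vert\mathbf{u}_{h}\Vert^{2}$ cannot be treated as a compact perturbation directly. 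I will therefore use a discrete Helmholtz decomposition
\[
\mathbf{u}_{h}=\mathbf{z}_{h}+\operatorname{curl}\psi_{h},
\]
where $\psi_{h}$ lies in the Lagrange space $S_{1}(\mathcal{T})$ with suitable boundary conditions, so that $\operatorname{curl}\psi_{h}\in\mathbf{RT}_{1}(\mathcal{T})$ is divergence-free. The piece $\mathbf{z}_{h}$ is discretely $L^{2}$-orthogonal to the divergence-free fields (with the boundary term accounted for). For the $\operatorname{curl}$ part,
\[
a_{k}^{\operatorname*{M2D}}(\mathbf{u}_{h},\operatorname{curl}\psi_{h})=-k^{2}(\mathbf{u}_{h},\operatorname{curl}\psi_{h})-ik(\mathbf{u}_{h}\cdot\mathbf{n},\operatorname{curl}\psi_{h}\cdot\mathbf{n})_{\Gamma}.
\]
Testing with $\operatorname{curl}\psi_{h}$ thus controls this component by the right-hand side and the boundary term, with $k$-explicit constants. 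This step mirrors the treatment of gradients in Maxwell problems.

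\emph{Duality argument for $\mathbf{z}_{h}$.} Since $\mathbf{z}_{h}$ is discretely divergence-free-orthogonal, a discrete compactness estimate holds on the convex domain: $\Vert\mathbf{z}_{h}-\mathbf{z}\Vert\lesssim h\Vert\operatorname{div}\mathbf{u}_{h}\Vert$, where $\mathbf{z}$ is its continuous counterpart in $\mathbf{H}^{1}$. Convexity supplies the $H^{1}$ (or $H^{2}$) regularity of the underlying scalar problem. I then estimate $\Vert\mathbf{z}\Vert$ by duality. Let $\boldsymbol{\xi}:=T_{-k}^{\operatorname*{M2D}}\mathbf{z}$, which is well defined by Proposition \ref{Lem:apriori:TME}, and use Galerkin orthogonality against an approximation $\boldsymbol{\xi}_{h}$ obtained from the Raviart--Thomas interpolant. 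The approximation error enters through $\Vert\operatorname{div}(\boldsymbol{\xi}-\boldsymbol{\xi}_{h})\Vert$ and $k\Vert\boldsymbol{\xi}-\boldsymbol{\xi}_{h}\Vert$. On the convex domain the stability bounds give $\operatorname{div}\boldsymbol{\xi}\in H^{1}$ with norm $\lesssim k\Vert\mathbf{z}\Vert$, and $\Vert\boldsymbol{\xi}\Vert_{\mathbf{H}^{1}}\lesssim k\Vert\mathbf{z}\Vert$. Collecting powers of $k$, the perturbation is bounded by
\[
C\,k^{2}h\,\Vert\mathbf{u}_{h}\Vert_{\operatorname*{div},k,+}^{2}.
\]

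\emph{Conclusion.} Combining the three pieces yields
\[
\Vert\mathbf{u}_{h}\Vert_{\operatorname*{div},k,+}^{2}\le C\,|a_{k}^{\operatorname*{M2D}}(\mathbf{u}_{h},\mathbf{v}_{h})|\Vert\mathbf{u}_{h}\Vert_{\operatorname*{div},k,+}+C\,k^{2}h\,\Vert\mathbf{u}_{h}\Vert_{\operatorname*{div},k,+}^{2}
\]
for a suitable $\mathbf{v}_{h}$. If $Ck^{2}h<1/2$, which is exactly condition (\ref{adapproxMW2D}), this is an inf-sup condition, so the square system (\ref{DiscM2D}) is uniquely solvable.

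The main obstacle is making the discrete compactness estimate and the boundary impedance term in the Helmholtz decomposition precise with the correct $k$-scaling. This is where convexity and the lowest-order commuting interpolant are essential.
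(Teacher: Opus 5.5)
The paper does not prove this proposition. It quotes it from \cite[Thm.~4.1]{chaumontfrelet_2019a}. Judged on its own, your sketch has a genuine gap, located exactly where the condition $k^{2}h$ has to come from. Two steps are asserted rather than proved.

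First, the discrete compactness bound $\Vert\mathbf{z}_h-\mathbf{z}\Vert\lesssim h\Vert\operatorname{div}\mathbf{u}_h\Vert$ needs a $k$-independent constant, and you do not establish one. To kill the boundary cross terms when testing with $\operatorname{curl}\psi_h$ you need the impedance-weighted orthogonality. Then $\mathbf{z}=\nabla\phi$, where $\phi$ solves a Laplace problem with a Robin coefficient of size $k$, and the straightforward $H^{2}$ estimate for that problem loses a factor $k$. You flag this yourself.

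Second, and more seriously, the bounds you do state do not give $k^{2}h$. With $\Vert\operatorname{div}\boldsymbol{\xi}\Vert_{H^{1}}\lesssim k\Vert\mathbf{z}\Vert$ and $\Vert\boldsymbol{\xi}\Vert_{\mathbf{H}^{1}}\lesssim k\Vert\mathbf{z}\Vert$, the interpolation error is $\Vert\boldsymbol{\xi}-\boldsymbol{\xi}_h\Vert_{\operatorname*{div},k}\lesssim k^{2}h\Vert\mathbf{z}\Vert$. Duality then yields only $\Vert\mathbf{z}\Vert\lesssim(\text{residual})+k^{2}h\Vert\mathbf{u}_h\Vert_{\operatorname*{div},k,+}$. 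The term you must absorb, $2k^{2}\Vert\mathbf{z}\Vert^{2}$, is therefore bounded only by $C(k^{3}h)^{2}\Vert\mathbf{u}_h\Vert_{\operatorname*{div},k,+}^{2}$, so what you actually obtain is a $k^{3}h$ condition.

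Moreover, the bound $\Vert\boldsymbol{\xi}\Vert_{\mathbf{H}^{1}}\lesssim k\Vert\mathbf{z}\Vert$ cannot hold as stated. Since $\boldsymbol{\xi}=-k^{-2}(\nabla\operatorname{div}\boldsymbol{\xi}+\mathbf{z})$, the field $\boldsymbol{\xi}$ is no smoother than $\mathbf{z}$. Rescuing your route would need an $\mathbf{H}^{1}$ bound on $\boldsymbol{\xi}$ that is one power of $k$ better and expressed through $\operatorname{div}\mathbf{u}_h$ rather than $\Vert\mathbf{z}\Vert$. Nothing in the sketch provides it.

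Both difficulties disappear if you use the other G\aa rding identity, $\Vert\mathbf{u}_h\Vert_{\operatorname*{div},k}^{2}=-\operatorname{Re}a_{k}^{\operatorname*{M2D}}(\mathbf{u}_h,\mathbf{u}_h)+2\Vert\operatorname{div}\mathbf{u}_h\Vert^{2}$. That is, treat the divergence, not $k^{2}\Vert\cdot\Vert^{2}$, as the perturbation; then no Helmholtz decomposition and no discrete compactness are needed. Since \eqref{DiscM2D} is square, injectivity suffices.

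Suppose $a_{k}^{\operatorname*{M2D}}(\mathbf{u}_h,\mathbf{v}_h)=0$ for all $\mathbf{v}_h$. Choosing $\mathbf{v}_h=\mathbf{u}_h$ gives $\mathbf{u}_h\cdot\mathbf{n}=0$ on $\Gamma$ and $\Vert\operatorname{div}\mathbf{u}_h\Vert=k\Vert\mathbf{u}_h\Vert$.

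Let $\boldsymbol{\zeta}\in\mathbf{V}_{\operatorname*{imp}}$ solve $a_{k}^{\operatorname*{M2D}}(\mathbf{v},\boldsymbol{\zeta})=(\operatorname{div}\mathbf{v},\operatorname{div}\mathbf{u}_h)$ for all $\mathbf{v}\in\mathbf{V}_{\operatorname*{imp}}$; this is well posed by Proposition~\ref{Lem:apriori:TME}. Testing with compactly supported fields shows $\boldsymbol{\zeta}=-k^{-2}\nabla q$, with $q:=\operatorname{div}\boldsymbol{\zeta}-\operatorname{div}\mathbf{u}_h$. Here $q$ satisfies $-\Delta q-k^{2}q=k^{2}\operatorname{div}\mathbf{u}_h$ and $\partial_{\mathbf{n}}q+\operatorname{i}kq=0$. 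The $k$-explicit $H^{2}$ bound for this Robin--Helmholtz problem on a convex polygon gives $|\boldsymbol{\zeta}|_{\mathbf{H}^{1}}\le k^{-2}|q|_{H^{2}}\lesssim k\Vert\operatorname{div}\mathbf{u}_h\Vert$.

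Let $\Pi_h$ be the canonical lowest-order interpolant, so that $\operatorname{div}\Pi_h=P_0\operatorname{div}$, where $P_0$ is the $L^{2}$-projection onto piecewise constants. Because $\operatorname{div}\mathbf{u}_h$ is piecewise constant, the divergence part of $a_{k}^{\operatorname*{M2D}}(\mathbf{u}_h,\boldsymbol{\zeta}-\Pi_h\boldsymbol{\zeta})$ vanishes, and so does the boundary part. Hence $\Vert\operatorname{div}\mathbf{u}_h\Vert^{2}=a_{k}^{\operatorname*{M2D}}(\mathbf{u}_h,\boldsymbol{\zeta}-\Pi_h\boldsymbol{\zeta})=-k^{2}(\mathbf{u}_h,\boldsymbol{\zeta}-\Pi_h\boldsymbol{\zeta})\le Ck^{3}h\Vert\mathbf{u}_h\Vert\,\Vert\operatorname{div}\mathbf{u}_h\Vert$.

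Therefore $k\Vert\mathbf{u}_h\Vert\le Ck^{2}h\cdot k\Vert\mathbf{u}_h\Vert$, and $\mathbf{u}_h=0$ as soon as $Ck^{2}h<1$. The same duality, combined with the G\aa rding identity, also yields a discrete inf-sup bound if you want one.
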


For the Maxwell problem the well-posedness for the Galerkin discretization by
N\'{e}d\'{e}lec elements (\ref{NdecurlcurlGal}) is investigated in
\cite[\S 9]{melenk_sauter_2024a}. Here, we formulate that result in a slightly
different way. We introduce the adjoint approximation property $\delta
_{k}:\mathbf{X}_{\operatorname*{imp}}\rightarrow\mathbb{R}$ by $\delta
_{k}\left(  \mathbf{0}\right)  :=0\ $and for $\mathbf{w}\in\mathbf{X}%
_{\operatorname*{imp}}\backslash\left\{  \mathbf{0}\right\}  $ by%
\begin{equation}
\delta_{k}(\mathbf{w}):=2\sup_{\mathbf{v}_{\operatorname*{N}}\in\mathbf{N}%
_{p}^{\operatorname*{I}}\left(  {\mathcal{T}}\right)  \backslash\left\{
\mathbf{0}\right\}  }\frac{\left\vert k^{2}\left(  \mathbf{w},\mathbf{v}%
_{\operatorname*{N}}\right)  _{\mathbf{L}^{2}\left(  \Omega\right)
}+\operatorname*{i}k\left(  \mathbf{w}_{T},\left(  \mathbf{v}%
_{\operatorname*{N}}\right)  _{T}\right)  _{\mathbf{L}^{2}\left(
\Gamma\right)  }\right\vert }{\left\Vert \mathbf{w}\right\Vert
_{\operatorname*{curl},k,+}\left\Vert \mathbf{v}_{\operatorname*{N}%
}\right\Vert _{\operatorname*{curl},k,+}}. \label{delta_k_def}%
\end{equation}

\begin{proposition}
Let the assumptions in Prop. \ref{lemma:apriori-homogeneous-rhs} be satisfied
and let $\mathbf{u}\in\mathbf{X}_{\operatorname*{imp}}$ be the exact solution.
Assume that\ for any given $\alpha\in\left]  0,1\right[  $, the conforming
simplicial finite element mesh $\mathcal{T}$ and the polynomial degree $p$ is
chosen such that for any function $\mathbf{w}\in\mathbf{X}%
_{\operatorname*{imp}}$ with%
\[
a_{k}^{\operatorname*{M3D}}\left(  \mathbf{w},\mathbf{v}\right)
=0\quad\forall\mathbf{v}\in\mathbf{N}_{p}^{\operatorname*{I}}\left(
{\mathcal{T}}\right)
\]
it holds $\delta_{k}\left(  \mathbf{w}\right)  \leq\alpha$. Then, the discrete
problem (\ref{NdecurlcurlGal}) has a unique solution $\mathbf{u}%
_{\operatorname*{N}}\in\mathbf{N}_{p}^{\operatorname*{I}}\left(
\mathcal{T}\right)  $ which satisfies the quasi-optimal error estimate%
\[
\left\Vert \mathbf{u}-\mathbf{u}_{\operatorname*{N}}\right\Vert
_{\operatorname*{curl},k,+}\leq\frac{1+\delta_{k}(\mathbf{e}%
_{\operatorname*{N}})}{1-\delta_{k}(\mathbf{e}_{\operatorname*{N}})}%
\inf_{\mathbf{v}_{\operatorname*{N}}\in\mathbf{N}_{p}^{\operatorname*{I}%
}\left(  \mathcal{T}\right)  }\left\Vert \mathbf{u}-\mathbf{v}%
_{\operatorname*{N}}\right\Vert _{\operatorname*{curl},k,+}\quad\text{for
}\mathbf{e}_{\operatorname*{N}}:=\mathbf{u}-\mathbf{u}_{\operatorname*{N}}.
\]

\end{proposition}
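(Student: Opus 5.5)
This is the standard Schatz-type argument, adapted to $\mathbf{H}(\operatorname*{curl})$ through the quantity $\delta_k$ in (\ref{delta_k_def}). Write $a:=a_{k}^{\operatorname*{M3D}}$ and $V_h:=\mathbf{N}_{p}^{\operatorname*{I}}(\mathcal{T})$. The starting point is the G\aa rding-type identity
\[
\operatorname{Re}a(\mathbf{w},\mathbf{w})+2k^{2}\Vert\mathbf{w}\Vert_{\mathbf{L}^{2}}^{2}=\Vert\mathbf{w}\Vert_{\operatorname*{curl},k}^{2}.
\]
Because the boundary term is purely imaginary, it is more convenient to use the combined form
\[
b(\mathbf{w},\mathbf{v}):=a(\mathbf{w},\mathbf{v})+2\bigl(k^{2}(\mathbf{w},\mathbf{v})+\operatorname*{i}k(\mathbf{w}_{T},\mathbf{v}_{T})_{\Gamma}\bigr).
\]
For this form, $\operatorname{Re}b(\mathbf{w},\mathbf{w})=\Vert\mathbf{w}\Vert^{2}_{\operatorname*{curl},k}$ and $\operatorname{Im}b(\mathbf{w},\mathbf{w})=k\Vert\mathbf{w}_{T}\Vert^{2}_{\Gamma}$. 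Hence $|b(\mathbf{w},\mathbf{w})|\ge\tfrac{1}{\sqrt2}\Vert\mathbf{w}\Vert_{\operatorname*{curl},k,+}^{2}$, and $b$ is coercive. I would double-check the constant, but it does not affect the structure of the argument; the factor $2$ built into $\delta_k$ is what absorbs it.

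\emph{Step 1: discrete uniqueness, hence existence.} Since $V_h$ is finite-dimensional, it suffices to show that $\mathbf{w}_h\in V_h$ with $a(\mathbf{w}_h,\mathbf{v}_h)=0$ for all $\mathbf{v}_h\in V_h$ forces $\mathbf{w}_h=0$. Such a $\mathbf{w}_h$ belongs to $\mathbf{X}_{\operatorname*{imp}}$ and is Galerkin-orthogonal, so the hypothesis gives $\delta_k(\mathbf{w}_h)\le\alpha$. Testing with $\mathbf{v}_h=\mathbf{w}_h$ gives
\[
\tfrac{1}{\sqrt2}\Vert\mathbf{w}_h\Vert^{2}_{\operatorname*{curl},k,+}\le|b(\mathbf{w}_h,\mathbf{w}_h)|=2\bigl|k^{2}(\mathbf{w}_h,\mathbf{w}_h)+\operatorname*{i}k((\mathbf{w}_h)_T,(\mathbf{w}_h)_T)_{\Gamma}\bigr|\le\delta_k(\mathbf{w}_h)\Vert\mathbf{w}_h\Vert_{\operatorname*{curl},k,+}^{2},
\]
where the last inequality uses the definition of $\delta_k$ with $\mathbf{v}_{\operatorname*{N}}=\mathbf{w}_h$. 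The constant bookkeeping must be arranged so that this reads $\Vert\mathbf{w}_h\Vert^{2}\le\delta_k(\mathbf{w}_h)\Vert\mathbf{w}_h\Vert^{2}\le\alpha\Vert\mathbf{w}_h\Vert^{2}$. Since $\alpha<1$, this yields $\mathbf{w}_h=0$.

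\emph{Step 2: quasi-optimality.} The error $\mathbf{e}_{\operatorname*{N}}=\mathbf{u}-\mathbf{u}_{\operatorname*{N}}$ is Galerkin-orthogonal, so $\delta_k(\mathbf{e}_{\operatorname*{N}})\le\alpha$. For an arbitrary $\mathbf{v}_{\operatorname*{N}}\in V_h$, split
\[
\Vert\mathbf{e}_{\operatorname*{N}}\Vert^{2}\lesssim b(\mathbf{e}_{\operatorname*{N}},\mathbf{e}_{\operatorname*{N}})=b(\mathbf{e}_{\operatorname*{N}},\mathbf{u}-\mathbf{v}_{\operatorname*{N}})+b(\mathbf{e}_{\operatorname*{N}},\mathbf{v}_{\operatorname*{N}}-\mathbf{u}_{\operatorname*{N}}).
\]
In the second term the $a$-part vanishes by orthogonality. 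What remains is bounded via $\delta_k$ by $\delta_k(\mathbf{e}_{\operatorname*{N}})\Vert\mathbf{e}_{\operatorname*{N}}\Vert\,\Vert\mathbf{v}_{\operatorname*{N}}-\mathbf{u}_{\operatorname*{N}}\Vert$, and $\Vert\mathbf{v}_{\operatorname*{N}}-\mathbf{u}_{\operatorname*{N}}\Vert\le\Vert\mathbf{e}_{\operatorname*{N}}\Vert+\Vert\mathbf{u}-\mathbf{v}_{\operatorname*{N}}\Vert$. For the first term, $b$ itself is not bounded with constant $1$. The way around this is to rewrite $b(\mathbf{e},\mathbf{u}-\mathbf{v})$ through the inner product $\langle\cdot,\cdot\rangle_{+}$ that induces $\Vert\cdot\Vert_{\operatorname*{curl},k,+}$, up to a phase. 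I expect this to give
\[
\Vert\mathbf{e}_{\operatorname*{N}}\Vert\le\Vert\mathbf{u}-\mathbf{v}_{\operatorname*{N}}\Vert+\delta_k(\mathbf{e}_{\operatorname*{N}})\bigl(\Vert\mathbf{e}_{\operatorname*{N}}\Vert+\Vert\mathbf{u}-\mathbf{v}_{\operatorname*{N}}\Vert\bigr).
\]
Rearranging yields the factor $(1+\delta_k)/(1-\delta_k)$, and taking the infimum over $\mathbf{v}_{\operatorname*{N}}$ completes the proof.

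\emph{Main obstacle.} The delicate point is the constant bookkeeping: $\Vert\cdot\Vert_{\operatorname*{curl},k,+}$ should be exactly the energy norm produced by the shifted form. In practice I would follow \cite[\S 9]{melenk_sauter_2024a} and define the shift so that $a+\text{shift}$ equals $\langle\cdot,\cdot\rangle_{+}$ up to a unimodular rotation of the boundary term. Then Step 1 gives exactly $\Vert\mathbf{w}\Vert\le\delta_k(\mathbf{w})\Vert\mathbf{w}\Vert$, and Step 2 gives the stated ratio.
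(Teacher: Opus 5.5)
Your gap is exactly where you suspect it, and the proposed fix does not close it. Your $b(\mathbf{w},\mathbf{v})=(\operatorname{curl}\mathbf{w},\operatorname{curl}\mathbf{v})+k^{2}(\mathbf{w},\mathbf{v})+\operatorname*{i}k(\mathbf{w}_{T},\mathbf{v}_{T})_{\Gamma}$ does satisfy $|b(\mathbf{w},\mathbf{v})|\le\|\mathbf{w}\|_{\operatorname*{curl},k,+}\|\mathbf{v}\|_{\operatorname*{curl},k,+}$ by Cauchy--Schwarz, since $|\operatorname*{i}k|=|k|$, so your remark that $b$ is not bounded with constant $1$ is wrong. The loss is entirely in coercivity: $|b(\mathbf{w},\mathbf{w})|\ge\frac{1}{\sqrt{2}}\|\mathbf{w}\|^{2}_{\operatorname*{curl},k,+}$ is sharp, with equality when $\|\mathbf{w}\|^{2}_{\operatorname*{curl},k}=|k|\|\mathbf{w}_{T}\|^{2}_{\mathbf{L}^{2}(\Gamma)}$. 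Consequently, Step 1 as written needs $\alpha<1/\sqrt{2}$. Step 2, done correctly, yields only $(\frac{1}{\sqrt{2}}-\delta)\|\mathbf{e}_{\operatorname*{N}}\|_{\operatorname*{curl},k,+}\le(1+\delta)\|\mathbf{u}-\mathbf{v}_{\operatorname*{N}}\|_{\operatorname*{curl},k,+}$ with $\delta=\delta_{k}(\mathbf{e}_{\operatorname*{N}})$; the inequality you say you expect does not follow. You also cannot rotate only the boundary term, since a unimodular factor multiplies the whole form. Making $a+\text{shift}$ equal to the inner product $\langle\cdot,\cdot\rangle_{+}$ of $\|\cdot\|_{\operatorname*{curl},k,+}$ forces the shift $2k^{2}(\cdot,\cdot)+(|k|+\operatorname*{i}k)(\cdot_{T},\cdot_{T})_{\Gamma}$. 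That is not $2(k^{2}(\cdot,\cdot)+\operatorname*{i}k(\cdot_{T},\cdot_{T})_{\Gamma})$, and the leftover $(|k|-\operatorname*{i}k)((\mathbf{e}_{\operatorname*{N}})_{T},\mathbf{v}_{T})_{\Gamma}$ is not controlled by $\delta_{k}$.

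Step 1 can be repaired with the sharp threshold. For a discrete kernel element, $0=\operatorname{Im}a(\mathbf{w}_{h},\mathbf{w}_{h})=-k\|(\mathbf{w}_{h})_{T}\|^{2}_{\mathbf{L}^{2}(\Gamma)}$, so $(\mathbf{w}_{h})_{T}=0$. Then $b(\mathbf{w}_{h},\mathbf{w}_{h})=\|\mathbf{w}_{h}\|^{2}_{\operatorname*{curl},k,+}$ exactly, and your chain gives $\|\mathbf{w}_{h}\|^{2}_{\operatorname*{curl},k,+}\le\alpha\|\mathbf{w}_{h}\|^{2}_{\operatorname*{curl},k,+}$ for every $\alpha<1$. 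Step 2 cannot be repaired this way, because the Galerkin error need not have vanishing trace, and the $\sqrt{2}$ loss is intrinsic rather than bookkeeping. Suppose $\delta_{k}(\mathbf{e}_{\operatorname*{N}})=0$. Then Galerkin orthogonality gives $(\operatorname{curl}\mathbf{e}_{\operatorname*{N}},\operatorname{curl}\mathbf{v})=0$, hence $\langle\mathbf{e}_{\operatorname*{N}},\mathbf{v}\rangle_{+}=(|k|-\operatorname*{i}k)((\mathbf{e}_{\operatorname*{N}})_{T},\mathbf{v}_{T})_{\Gamma}$ for all $\mathbf{v}\in\mathbf{N}_{p}^{\operatorname*{I}}(\mathcal{T})$. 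The claimed bound with factor $1$ would make $\mathbf{u}_{\operatorname*{N}}$ the $\|\cdot\|_{\operatorname*{curl},k,+}$-best approximation, i.e. force this pairing to vanish, and nothing in the hypotheses enforces that. For example, take $\mathbf{u}=\nabla\psi$ with $\psi$ subject to the finitely many linear conditions $k^{2}(\nabla\psi,\mathbf{v})+\operatorname*{i}k((\nabla\psi)_{T},\mathbf{v}_{T})_{\Gamma}=0$ for all discrete $\mathbf{v}$. Then $\mathbf{u}_{\operatorname*{N}}=0$ and $\delta_{k}(\mathbf{e}_{\operatorname*{N}})=0$, but generically $((\nabla\psi)_{T},\mathbf{v}_{T})_{\Gamma}\neq0$ for some discrete $\mathbf{v}$. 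What your route actually proves is unique solvability for every $\alpha<1$, plus quasi-optimality with constant $\sqrt{2}(1+\delta)/(1-\sqrt{2}\delta)$ for $\delta<1/\sqrt{2}$. The paper gives no argument here beyond the pointer to \cite[Thm.~9.7, Prop.~9.1]{melenk_sauter_2024a}, so the exact factor $(1+\delta)/(1-\delta)$ should be checked against the norm and shift used in that source.
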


The proof is a simple reformulation of \cite[Thm. 9.7, Prop. 9.1]%
{melenk_sauter_2024a} which is based on \cite{MelenkLoehndorf}.

\begin{remark}
In \cite[Lem. 9.6]{melenk_sauter_2024a} it is explained how the polynomial
degree $p$ and the mesh have to be chosen such that $\delta_{k}\left(
\mathbf{w}\right)  \leq\alpha$ holds. An essential ingredient of this theory
is to prove that the adjoint approximation property%
\[
\eta\left(  \mathbf{N}_{p}^{\operatorname*{I}}\left(  {\mathcal{T}}\right)
\right)  :=\sup_{\mathbf{v}_{0}\in\mathbf{V}_{-k,0}\backslash\left\{
0\right\}  }\inf_{\mathbf{w}_{\operatorname*{N}}\in\mathbf{N}_{p}%
^{\operatorname*{I}}\left(  \mathcal{T}\right)  }\frac{\left\Vert
T_{-k}^{\operatorname*{M3D}}\mathbf{v}_{0}-\mathbf{w}_{\operatorname*{N}%
}\right\Vert _{\operatorname*{curl},k,+}}{\left\Vert \mathbf{v}_{0}\right\Vert
_{\operatorname*{curl},k,+}}%
\]
can be made small for sufficiently fine meshes $\mathcal{T}$ and polynomial
degree $p$ depending logarithmically on $k$. In this way, the notion
\textquotedblleft$\mathbf{N}_{p}^{\operatorname*{I}}\left(  \mathcal{T}%
\right)  $ is sufficiently rich\textquotedblright\ becomes a qualitative condition.
\end{remark}

In summary, we have illustrated that for some high frequency scattering
problems which are well-posed on the continuous level, the conforming Galerkin
discretization is well-posed if the finite element space is \textquotedblleft
rich enough\textquotedblright\ such that an adjoint approximation property is
sufficiently small.

Hence, it is a natural question whether the conforming Galerkin discretization
can be singular if the resolution condition is not satisfied or whether this
is an artefact of the theory based on the Schatz perturbation argument.
Interestingly, for the Helmholtz equation with Robin boundary condition on a
one-dimensional interval $\Omega=\left(  a,b\right)  $ it has been proven in
\cite{bernkopf2021solvability} that the conforming Galerkin discretization
always admits a unique solution without any restriction to the mesh and/or the
polynomial degree.

The resolution condition is sufficient for discrete well-posedness but leaves
the question open whether there exist meshes such that the Galerkin
discretization becomes singular. This is a motivation for the development of
various modifications of the conforming Galerkin discretization which are
always well-posed; among them is the first order system least squares (FOSLS)
finite element method (see \cite{chen-qiu17}, \cite{Lee_Manteuffel_FOSLS},
\cite{Bernkopf_FOSLS_InProc}), stabilized methods \cite{FW09}, \cite{zhu-wu13}%
, \cite{chen-lu-xu13}, and Discontinuous Petrov Galerkin methods
\cite{Petridis_DPG_Helm}. These methods have in common that the resulting
discretization always have a unique solution. However, the number of degrees
of freedom can be substantially larger than for the conforming Galerkin FEM
and their implementation is more subtle. This was one motivation to
investigate the question:%
\[%
\begin{tabular}
[c]{|l|}\hline
$\text{Can the conforming Galerkin FEM lead to singular system matrices}$\\
$\text{for problems (\ref{DiscHelm}), (\ref{DiscM2D}), (\ref{NdecurlcurlGal}%
)?}$\\\hline
\end{tabular}
\
\]

\section{Bernkopf-type meshes\label{SecBern}}

In \cite{bernkopf2021solvability} an example (Bernkopf mesh) is presented for
the Helmholtz problem \ref{HelmProb} in 2D for lowest order elements such that
the conforming Galerkin discretization has a non-trivial homogeneous solution.
In this section, we investigate whether also for higher polynomial degrees, 3D
problems, and Maxwell's equation such examples exist.

\begin{remark}
We have realized the computations for all examples in the symbolic mathematics
program \textsc{Mathematica}\texttrademark\ (cf. \cite{Mathematica}) and hence
no rounding errors occur in our computation.
\end{remark}

First, we introduce the domain and the meshes and start with the
two-dimensional Bernkopf mesh.

\begin{definition}
[2D Bernkopf mesh](see \cite[\S 3.2]{bernkopf2021solvability}) Let
$\Omega=\left(  -1,1\right)  ^{2}$. The vertices in the triangulation are
$\mathbf{P}_{1}^{\Gamma}=\left(  -1,-1\right)  ^{T}$, $\mathbf{P}_{2}^{\Gamma
}=\left(  1,-1\right)  ^{T}$, $\mathbf{P}_{3}^{\Gamma}=\left(  1,1\right)
^{T}$, $\mathbf{P}_{4}^{\Gamma}=\left(  -1,1\right)  ^{T}$, $\mathbf{P}%
_{1}^{\Omega}=\left(  -\alpha,0\right)  ^{T}$, $\mathbf{P}_{2}^{\Omega
}=\left(  0,-\alpha\right)  ^{T}$, $\mathbf{P}_{3}^{\Omega}=\left(
\alpha,0\right)  ^{T}$, $\mathbf{P}_{4}^{\Omega}=\left(  0,\alpha\right)
^{T}$, $\mathbf{P}_{5}^{\Omega}=\left(  0,0\right)  ^{T}$ for a parameter
$\alpha\in\left]  0,1\right[  $. The triangulation $\mathcal{T}_{2}$ consists
of $12$ triangles $T_{j}$, $j\in\left\{  1,2,\ldots,12\right\}  $ as depicted
in Figure \ref{Fig1}.%
%TCIMACRO{\FRAME{ftbpFU}{2.8392in}{2.8392in}{0pt}{\Qcb{Bernkopf mesh in 2D for
%$\alpha=1/2$.}}{\Qlb{Fig1}}{bernkopf_square.eps}%
%{\special{ language "Scientific Word";  type "GRAPHIC";
%maintain-aspect-ratio TRUE;  display "USEDEF";  valid_file "F";
%width 2.8392in;  height 2.8392in;  depth 0pt;  original-width 3.5008in;
%original-height 3.5008in;  cropleft "0";  croptop "1";  cropright "1";
%cropbottom "0";  filename 'bernkopf_square.eps';file-properties "XNPEU";}} }%
%BeginExpansion
\begin{figure}[ptb]%
\centering
\includegraphics[
height=2.8392in,
width=2.8392in
]%
{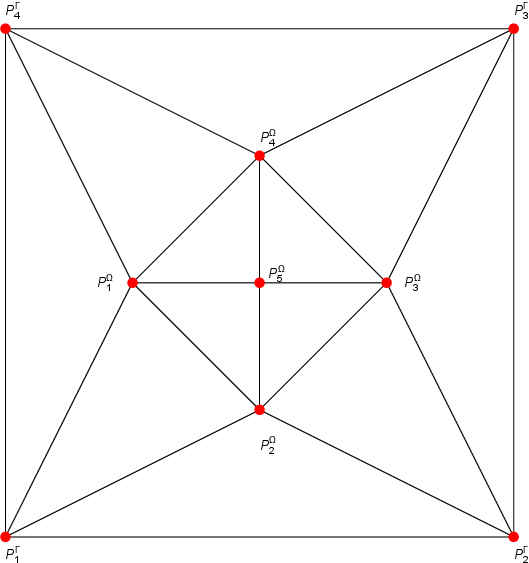}%
\caption{Bernkopf mesh in 2D for $\alpha=1/2$.}%
\label{Fig1}%
\end{figure}
%EndExpansion

\end{definition}

\begin{definition}
[3D Bernkopf mesh]Let $\mathbf{V}_{i}^{\Gamma}=\left(  \mathbf{P}_{i}^{\Gamma
},0\right)  $ for $i\in\left\{  1,2,3,4\right\}  $. Set $\mathbf{V}%
_{5}^{\Gamma}:=\left(  0,0,1\right)  ^{T}$, and $\mathbf{V}_{6}^{\Gamma
}:=\left(  0,0,-1\right)  ^{T}$. The domain $\Omega$ is the convex hull of
$\mathbf{V}_{i}^{\Gamma}$, $i\in\left\{  1,2,\ldots,6\right\}  $ and the inner
vertices are given by $\mathbf{V}_{i}^{\Omega}=\left(  \mathbf{P}_{i}^{\Omega
},0\right)  ^{T}$, $i\in\left\{  1,\ldots,5\right\}  $. The mesh
$\mathcal{T}_{3}$ (see \ref{Fig2}) consists of $24$ tetrahedra of the form
$K_{j}=\operatorname*{conv}\left(  T_{j},\mathbf{V}_{5}^{\Gamma}\right)  $,
$1\leq j\leq12$ and $K_{12+j}=\operatorname*{conv}\left(  T_{j},\mathbf{V}%
_{6}^{\Gamma}\right)  $, $1\leq j\leq12$.%
%TCIMACRO{\FRAME{ftbpFU}{3.8069in}{3.5103in}{0pt}{\Qcb{Bernkopf mesh in 3D for
%the choice $\alpha=1/2$.}}{\Qlb{Fig2}}{bernkopf_diamond.swp.eps}%
%{\special{ language "Scientific Word";  type "GRAPHIC";
%maintain-aspect-ratio TRUE;  display "USEDEF";  valid_file "F";
%width 3.8069in;  height 3.5103in;  depth 0pt;  original-width 3.7593in;
%original-height 3.4636in;  cropleft "0";  croptop "1";  cropright "1";
%cropbottom "0";  filename 'bernkopf_diamond.swp.eps';file-properties "XNPEU";}%
%} }%
%BeginExpansion
\begin{figure}[ptb]%
\centering
\includegraphics[
height=3.5103in,
width=3.8069in
]%
{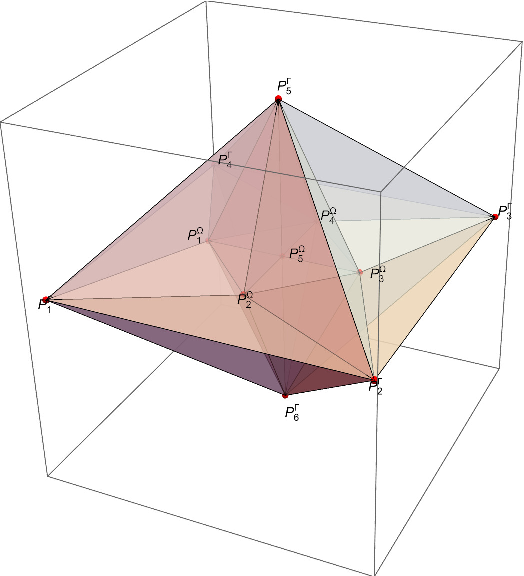}%
\caption{Bernkopf mesh in 3D for the choice $\alpha=1/2$.}%
\label{Fig2}%
\end{figure}
%EndExpansion

\end{definition}

\subsection{Helmholtz Problem \ref{HelmProb}, $d=2$, $p=1,2$}

We explain our conceptual approach for finding critical wavenumbers
$k\in\mathbb{R}\backslash\left\{  0\right\}  $ such that the system matrix for
the Helmholtz problem is singular, while the procedure for Problems
\ref{divdivProb} and \ref{curlcurlProb} is completely analogue. The system
matrix is singular iff there is a non-trivial $u\in S\backslash\left\{
0\right\}  $ to the equation%
\begin{equation}
a^{\operatorname*{H}}\left(  u,v\right)  =0\quad\forall v\in S. \label{homeq}%
\end{equation}
By choosing $v=u$ and considering the imaginary part of the sesquilinear form
for $k\in\mathbb{R}\backslash\left\{  0\right\}  $ leads to $\left\Vert
u\right\Vert _{L^{2}\left(  \Gamma\right)  }=0$ and, in turn to $\left.
u\right\vert _{\Gamma}=0$. For such a function $u$ the boundary part of the
sesquilinear form is zero and we have%
\[
a^{\operatorname*{H}}\left(  u,v\right)  =\left(  \nabla u,\nabla v\right)
_{L^{2}\left(  \Omega\right)  }-k^{2}\left(  u,v\right)  _{L^{2}\left(
\Omega\right)  }.
\]
Let $\left(  b_{y}\right)  _{y\in\Sigma}$ denote the standard local nodal
basis for the space $S$ with the set $\Sigma$ of degrees of freedom. The
degrees of freedom for the inner nodes define the subset $\overset{\circ
}{\Sigma}$. We compute the matrix%
\begin{equation}
A_{\alpha}\left(  k\right)  =\left(  a_{y,z}\left(  k\right)  \right)
_{\substack{y\in\Sigma\\z\in\overset{\circ}{\Sigma}}}\quad\text{with\quad
}a_{y,z}:=\left(  \nabla b_{y},\nabla b_{z}\right)  _{L^{2}\left(
\Omega\right)  }-k^{2}\left(  b_{y},b_{z}\right)  _{L^{2}\left(
\Omega\right)  } \label{defAk}%
\end{equation}
which is a rectangular matrix with more rows than columns. From basi{c} linear
algebra we know that problem (\ref{homeq}) has a non-trivial solution $u\in
S\backslash\left\{  0\right\}  $ iff $\ker A_{\alpha}\left(  k\right)  $ is
non-trivial. This is equivalent to%
\[
\det B_{\alpha}\left(  k\right)  =0\quad\text{for }B_{\alpha}\left(  k\right)
:=\left(  A_{\alpha}\left(  k\right)  \right)  ^{T}A_{\alpha}\left(  k\right)
.
\]
From the definition in (\ref{defAk}) it is clear that $\det B_{\alpha}\left(
k\right)  $ is a polynomial in $\kappa=k^{2}$ so that $Q_{\alpha}\left(
\kappa\right)  :=\det B_{\alpha}\left(  \sqrt{\kappa}\right)  $ is a
polynomial in $\kappa$. To find critical frequencies $k\in\mathbb{R}%
\backslash\left\{  0\right\}  $, we are interested in positive roots of
$Q_{\alpha}\left(  \kappa\right)  $. Our \textsc{Mathematica}\ implementation
computes first the system matrix $A_{\alpha}\left(  k\right)  $, then the
matrix $B_{\alpha}\left(  k\right)  $, and finally its determinant $\det
B_{\alpha}\left(  k\right)  $ as well as the polynomial $Q_{\alpha}$. We
discuss $Q_{\alpha}$ and finally compute positive roots which are the (squares
of the) critical wave numbers $k$. Since $Q_{\alpha}$ is a polynomial the
number of roots is always finite. We derive by symbolic calculus that, for any
$\alpha\in\left]  0,1\right[  $, there exists at least one positive root
$\kappa$ of $Q_{\alpha}$ and hence, at least two spurious frequencies. Since
the $\alpha$-dependent coefficients in $Q_{\alpha}$ are rather complicated we
do not attempt to determine \textit{all} roots of $Q_{\alpha}$ but some of
them.\bigskip

For any $\alpha\in\left]  0,1\right[  $ and $p=1$ our \textsc{Mathematica}
implementation shows that $Q_{\alpha}\left(  \kappa\right)  $ can be
factorized into three polynomials $\eta_{1}\in\mathbb{P}_{1}$, $\eta_{2}%
\in\mathbb{P}_{2}$, $\eta_{3}\in\mathbb{P}_{4}$: $Q_{\alpha}\left(
\kappa\right)  =\frac{\eta_{1}^{2}\eta_{2}^{2}\eta_{3}}{f_{\alpha}}$, where
$f_{\alpha}$ is a positive polynomial for $\alpha\in\left]  0,1\right[  $. The
root of $\eta_{1}$ is given by $\frac{6\left(  2-\alpha\right)  }%
{\alpha\left(  1-\alpha\right)  }$ (positive for $\alpha\in\left]  0,1\right[
$) while the two roots of $\eta_{2}$ have non-zero imaginary part and hence
are not relevant for critical wavenumbers $k\in\mathbb{R}\backslash\left\{
0\right\}  $. We do not analyze the polynomial $\eta_{3}$ furthermore but
simply conclude from the analysis of $\eta_{1}$ that for any $\alpha\in\left]
0,1\right[  $ two critical wavenumbers exist\footnote{Note that this example
for $p=1$ is the one treated in \cite[Lem. 3.2]{bernkopf2021solvability} and
an elementary proof is given without symbolic computation.}%
\[
k\left(  \alpha\right)  ^{2}=\frac{6\left(  2-\alpha\right)  }{\alpha\left(
1-\alpha\right)  }%
\]
and the conforming Galerkin discretization with finite element space
$S_{1}\left(  \mathcal{T}_{2}\right)  $, has a \textit{spurious mode}, i.e.,
there exists an element $u_{p}\in S_{p}\left(  \mathcal{T}_{2}\right)
\backslash\left\{  0\right\}  $ such that%
\[
a_{k\left(  \alpha\right)  }^{\operatorname*{H}}\left(  u_{p},v\right)
=0\quad\forall v\in S_{p}\left(  \mathcal{T}_{2}\right)  .
\]

For any $\alpha\in\left]  0,1\right[  $ and $p=2$, the \textsc{Mathematica}
implementation shows that $Q_{\alpha}\left(  \kappa\right)  $ can be
factorized into five polynomials $\eta_{1}\in\mathbb{P}_{1}$, $\eta_{2}%
\in\mathbb{P}_{4}$, $\eta_{3}\in\mathbb{P}_{6}$, $\eta_{4},\eta_{5}%
\in\mathbb{P}_{10}$: $Q_{\alpha}\left(  \kappa\right)  =f_{\alpha}\eta_{1}%
^{2}\eta_{2}\eta_{3}\eta_{4}\eta_{5}^{2}$, where $f_{\alpha}$ is a positive,
rational function for $\alpha\in\left]  0,1\right[  $. The root of $\eta_{1}$
is given by
\[
k\left(  \alpha\right)  ^{2}=\frac{15\left(  \alpha^{2}+\left(  2-\alpha
\right)  ^{2}\right)  }{\alpha\left(  1-\alpha\right)  \left(  2-\alpha
\right)  }%
\]
(positive for $\alpha\in\left]  0,1\right[  $) while the roots of the other
polynomials are not discussed here. In this way, we have shown that for any
$\alpha\in\left]  0,1\right[  $ two critical wavenumbers exist. The spurious
modes for $p=1,2$ are depicted in Figure \ref{FigHelm2Dp12}.%
%TCIMACRO{\TeXButton{B}{\begin{figure}[tbp] \centering}}%
%BeginExpansion
\begin{figure}[tbp] \centering
%EndExpansion
$%
\begin{array}
[c]{ccc}%
%TCIMACRO{\FRAME{itbpF}{2.4656in}{2.4656in}{0in}{}{}{mode_{p}1.eps}%
%{\special{ language "Scientific Word";  type "GRAPHIC";
%maintain-aspect-ratio TRUE;  display "USEDEF";  valid_file "F";
%width 2.4656in;  height 2.4656in;  depth 0in;  original-width 28.4558in;
%original-height 28.4558in;  cropleft "0";  croptop "1";  cropright "1";
%cropbottom "0";  filename 'mode_P1.eps';file-properties "XNPEU";}} }%
%BeginExpansion
{\includegraphics[
height=2.4656in,
width=2.4656in
]%
{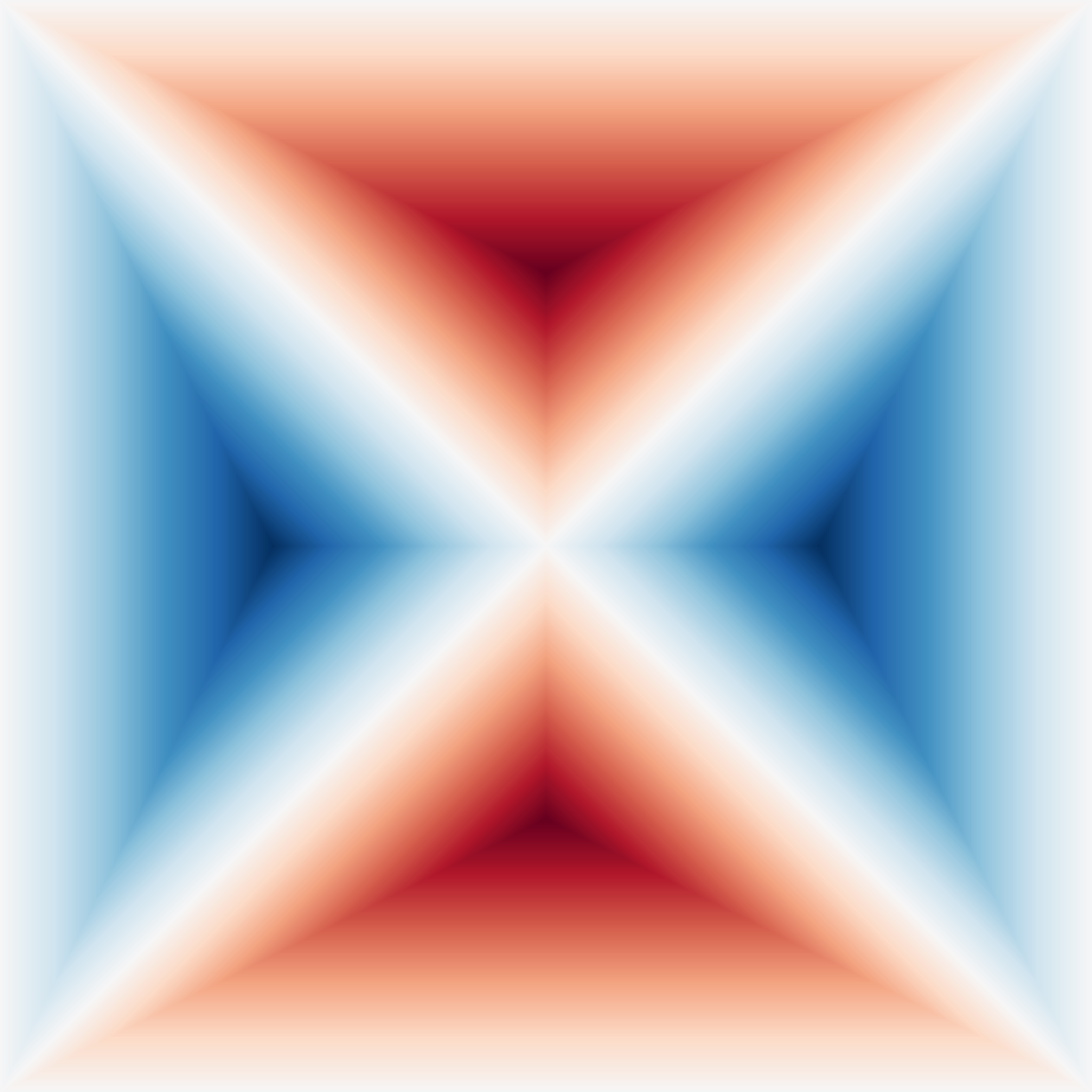}%
}
%EndExpansion
&  &
%TCIMACRO{\FRAME{itbpF}{2.4725in}{2.4725in}{0in}{}{}{mode_{p}2.eps}%
%{\special{ language "Scientific Word";  type "GRAPHIC";
%maintain-aspect-ratio TRUE;  display "USEDEF";  valid_file "F";
%width 2.4725in;  height 2.4725in;  depth 0in;  original-width 28.4558in;
%original-height 28.4558in;  cropleft "0";  croptop "1";  cropright "1";
%cropbottom "0";  filename 'mode_P2.eps';file-properties "XNPEU";}} }%
%BeginExpansion
{\includegraphics[
height=2.4725in,
width=2.4725in
]%
{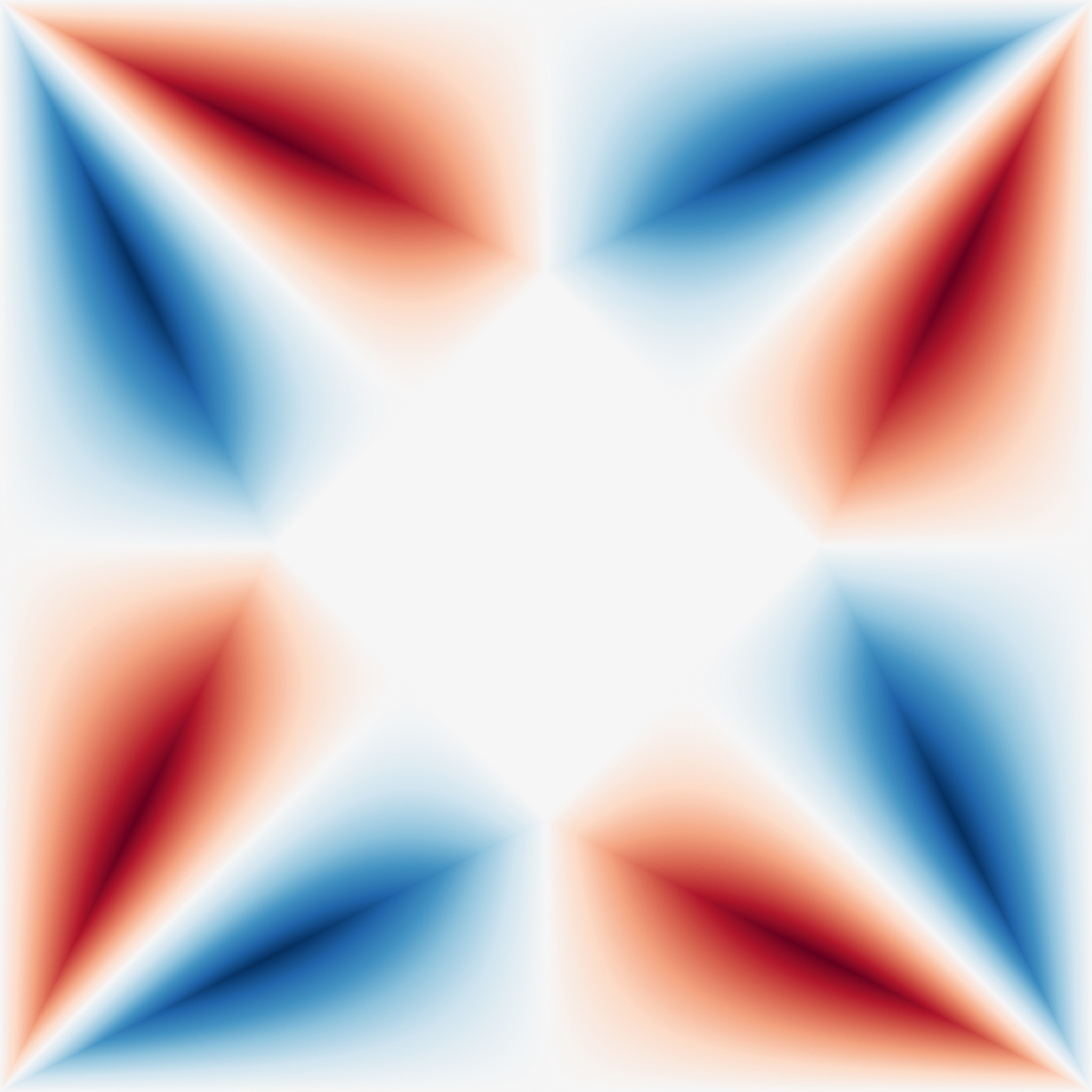}%
}
%EndExpansion
\end{array}
$\caption{Spurious
solutions for the $\mathbb{P}_{1}$ and $\mathbb{P}_{2}$ Galerkin
discretization of the two-dimensional Helmholtz equation for $\alpha=1/2$.
Left: $p=1$ and $k\left(  \alpha\right)  =\pm6$, right: $p=2$ and $k\left(
\alpha\right)  = \pm 10$.\label{FigHelm2Dp12}}%
%TCIMACRO{\TeXButton{E}{\end{figure}}}%
%BeginExpansion
\end{figure}%
%EndExpansion

\subsection{Helmholtz Problem \ref{HelmProb}, $d=3$}

For any $\alpha\in\left]  0,1\right[  $ and $p=1$, symbolic manipulation by
the \textsc{Mathematica} implementation shows that $Q_{\alpha}\left(
\kappa\right)  $ can be factorized into three polynomials $\eta_{1}%
\in\mathbb{P}_{1}$, $\eta_{2}\in\mathbb{P}_{2}$, $\eta_{3}\in\mathbb{P}_{4}:$
$Q_{\alpha}\left(  \kappa\right)  =\frac{\eta_{1}^{2}\eta_{2}^{2}\eta_{3}%
}{f_{\alpha}}$ with a positive polynomial $f_{\alpha}$ for all $\alpha
\in\left]  0,1\right[  $. The root of $\eta_{1}$ is given by $\kappa=\frac
{20}{\alpha\left(  1-\alpha\right)  }$ (positive for $\alpha\in\left]
0,1\right[  $) while the roots of the other polynomials are not discussed
here. In this way, we have shown that for any $\alpha\in\left]  0,1\right[  $
two critical wavenumbers exist%
\[
k\left(  \alpha\right)  ^{2}=\frac{20}{\alpha\left(  1-\alpha\right)  }.
\]

For any $\alpha\in\left]  0,1\right[  $ and $p=2$ symbolic manipulation shows
that $Q_{\alpha}\left(  \kappa\right)  $ can be factorized into eight
polynomials $\eta_{1},\eta_{2}\in\mathbb{P}_{1}$, $\eta_{3}\in\mathbb{P}_{2}$,
$\eta_{4},\eta_{5}\in\mathbb{P}_{4}$, $\eta_{6}\in\mathbb{P}_{8}$, $\eta
_{7}\in\mathbb{P}_{12}$, $\eta_{8}=\mathbb{P}_{14}:Q_{\alpha}\left(
\kappa\right)  =f_{\alpha}\eta_{1}^{2}\eta_{2}^{2}\eta_{3}^{2}\eta_{4}\eta
_{5}\eta_{6}\eta_{7}^{2}\eta_{8}$ with a positive rational function
$f_{\alpha}$ for all $\alpha\in\left]  0,1\right[  $. The root of $\eta_{1}$
is given by $\kappa=\frac{42}{\alpha\left(  1-\alpha\right)  }$ (positive for
$\alpha\in\left]  0,1\right[  $), the root of $\eta_{2}$ by $\frac
{21(\alpha+1)\alpha^{2}+2(2-\alpha)}{(2-\alpha)(1-\alpha)\alpha}$ (positive
for $\alpha\in\left]  0,1\right[  $), the roots of $\eta_{3}$ have non-zero
imaginary part for any $\alpha\in\left]  0,1\right[  $ and not relevant for
critical frequencies $k\in\mathbb{R}\backslash\left\{  0\right\}  $. The
functions $\eta_{4},\ldots,\eta_{8}$ are not discussed. In this way, we have
shown that for any $\alpha\in\left]  0,1\right[  $ four critical wavenumbers
exist%
\[
k\left(  \alpha\right)  ^{2}\in\left\{  \frac{42}{\alpha\left(  1-\alpha
\right)  },\frac{21(\alpha+1)\alpha^{2}+2(2-\alpha)}{(2-\alpha)(1-\alpha
)\alpha}\right\}  .
\]

The spurious solution for $p=1$ is shown in Figure \ref{FigHelm3DP1} while
each of the two cases for $p=2$ are depicted in Figure \ref{FigHelm2DP2}.%
%TCIMACRO{\FRAME{ftbpFU}{3in}{2.0003in}{0pt}{\Qcb{Spurious solution for the
%$\QTR{Bbb}{P}_{1}$ Galerkin discretization of the 3D Helmholtz problem for the
%Bernkopf mesh with $\alpha=1/2$ and critical wavenumber $k_{1}\left(
%1/2\right)  =\pm\sqrt{80}$.}}{\Qlb{FigHelm3DP1}}{p1.eps}%
%{\special{ language "Scientific Word";  type "GRAPHIC";
%maintain-aspect-ratio TRUE;  display "USEDEF";  valid_file "F";  width 3in;
%height 2.0003in;  depth 0pt;  original-width 0pt;  original-height 0pt;
%cropleft "0";  croptop "1";  cropright "1";  cropbottom "0";
%filename 'P1.eps';file-properties "XNPEU";}} }%
%BeginExpansion
\begin{figure}[ptb]%
\centering
\includegraphics[
height=2.0003in,
width=3in
]%
{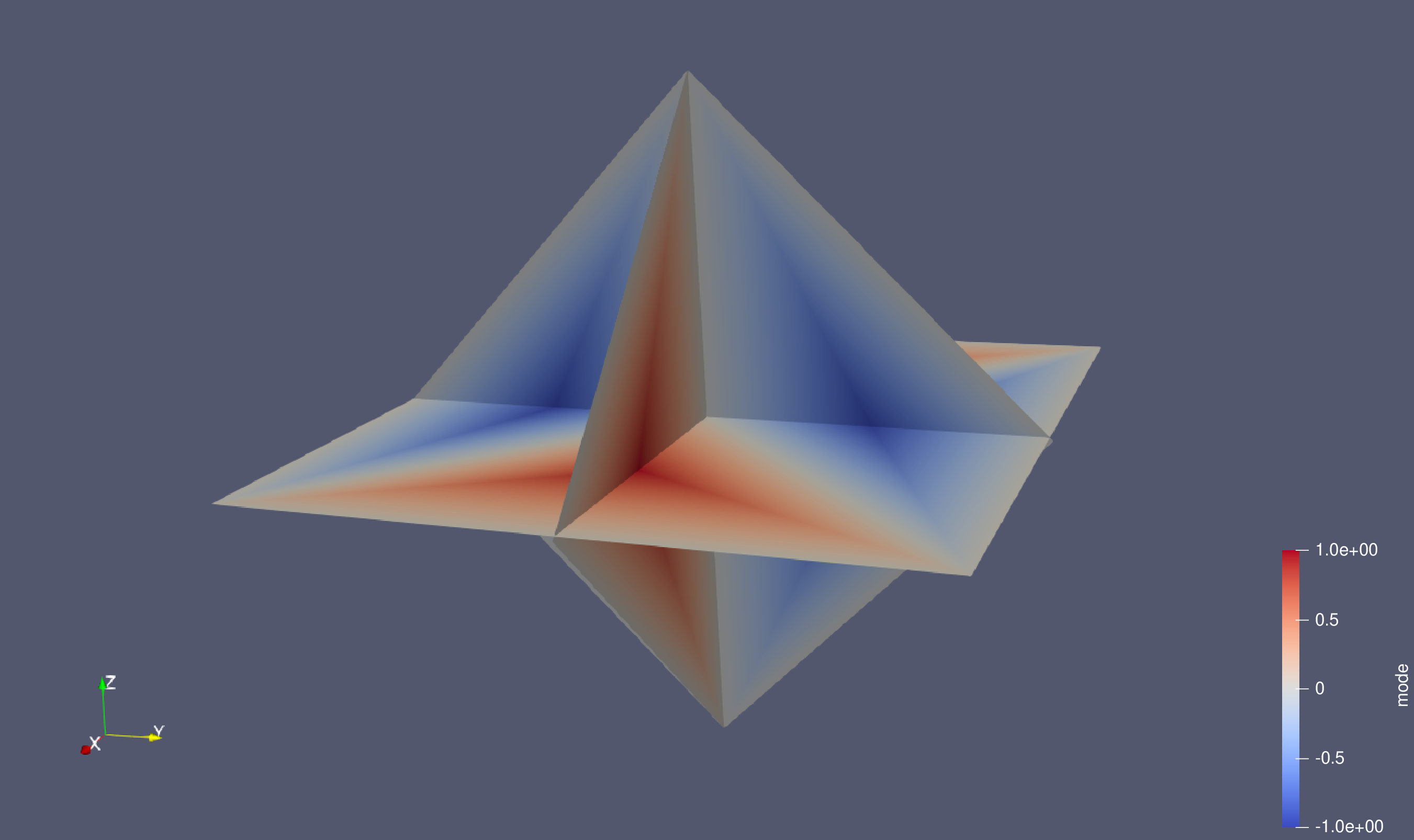}%
\caption{Spurious solution for the $\mathbb{P}_{1}$ Galerkin discretization of
the 3D Helmholtz problem for the Bernkopf mesh with $\alpha=1/2$ and critical
wavenumber $k_{1}\left(  1/2\right)  =\pm\sqrt{80}$.}%
\label{FigHelm3DP1}%
\end{figure}
%EndExpansion
%

%TCIMACRO{\TeXButton{B}{\begin{figure}[tbp] \centering}}%
%BeginExpansion
\begin{figure}[tbp] \centering
%EndExpansion
$%
\begin{array}
[c]{ccc}%
%TCIMACRO{\FRAME{itbpF}{2.5417in}{1.6942in}{0in}{}{}{p2_{1}.eps}%
%{\special{ language "Scientific Word";  type "GRAPHIC";
%maintain-aspect-ratio TRUE;  display "USEDEF";  valid_file "F";
%width 2.5417in;  height 1.6942in;  depth 0in;  original-width 0pt;
%original-height 0pt;  cropleft "0";  croptop "1";  cropright "1";
%cropbottom "0";  filename 'P2_1.eps';file-properties "XNPEU";}} }%
%BeginExpansion
{\includegraphics[
height=1.6942in,
width=2.5417in
]%
{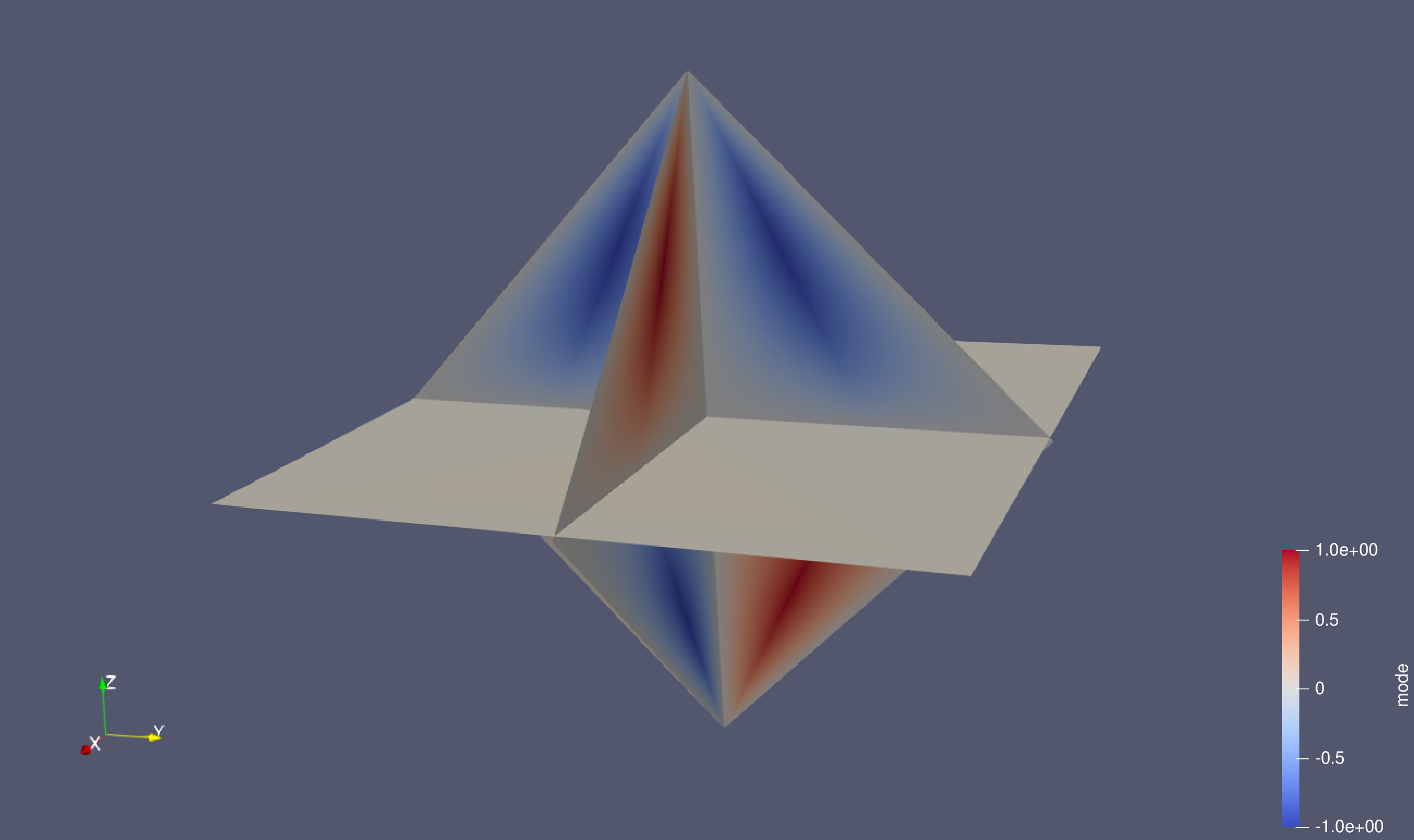}%
}
%EndExpansion
&  &
%TCIMACRO{\FRAME{itbpF}{2.5348in}{1.6898in}{0in}{}{}{p2_{2}.eps}%
%{\special{ language "Scientific Word";  type "GRAPHIC";
%maintain-aspect-ratio TRUE;  display "USEDEF";  valid_file "F";
%width 2.5348in;  height 1.6898in;  depth 0in;  original-width 0pt;
%original-height 0pt;  cropleft "0";  croptop "1";  cropright "1";
%cropbottom "0";  filename 'P2_2.eps';file-properties "XNPEU";}} }%
%BeginExpansion
{\includegraphics[
height=1.6898in,
width=2.5348in
]%
{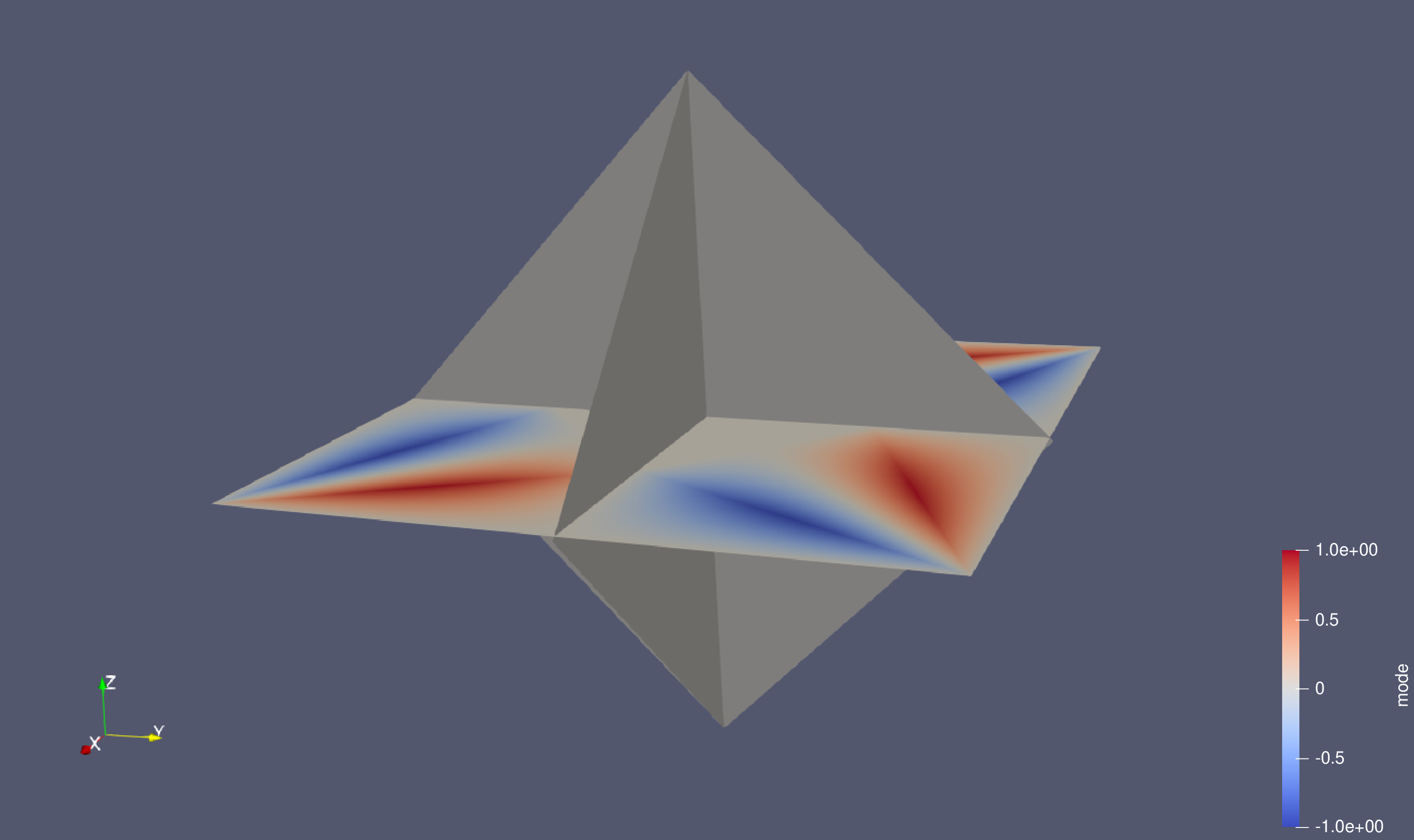}%
}
%EndExpansion
\end{array}
$\caption{Spurious solutions for the $\mathbb{P}_{2}$ Galerkin
discretization for the Helmholtz problem in 3D for $\alpha=1/2$. Left:
$k_{2}\left(  1/2\right)  ^{2}=168$, right: $k_{2}\left(  1/2\right)
^{2}=189$.\label{FigHelm2DP2}}%
%TCIMACRO{\TeXButton{E}{\end{figure}}}%
%BeginExpansion
\end{figure}%
%EndExpansion

\subsection{Maxwell 2D, RT1:}

For any $\alpha\in\left]  0,1\right[  $ and $p=1$, symbolic manipulation by
the \textsc{Mathematica} implementation shows that $Q_{\alpha}\left(
\kappa\right)  $ for this problem can be factorized into five polynomials
$\eta_{1}\left(  \kappa\right)  =\kappa^{10}$, $\eta_{2},\eta_{3}\in
\mathbb{P}_{2}$, $\eta_{4}\in\mathbb{P}_{4}$, $\eta_{5}\in\mathbb{P}_{6}:$
$Q_{\alpha}\left(  \kappa\right)  =f_{\alpha}\eta_{1}\eta_{2}^{2}\eta_{3}%
\eta_{4}\eta_{5}^{2}$ with a positive rational function $f_{\alpha}$ for all
$\alpha\in\left]  0,1\right[  $. Clearly, the only root of $\eta_{1}$ is
$\kappa=0$ which is not relevant for critical wavenumbers $k\in\mathbb{R}%
\backslash\left\{  0\right\}  $; $\eta_{2}$ has two positive roots\footnote{It
is easy to see that, by expanding the fraction for the case $\left(  \left(
\cdot\right)  -\sqrt{\cdot}\right)  $ by $\left(  \left(  \cdot\right)
+\sqrt{\cdot}\right)  $, the numerator becomes $6\alpha^{2}\left(
4+\alpha\left(  10-8\alpha-\alpha^{2}+\alpha^{3}\right)  \right)  $, i.e.,
positive for any $\alpha\in\left]  0,1\right[  $ while the positivity of the
denominator is obvious. For the case $\left(  \left(  \cdot\right)
+\sqrt{\cdot}\right)  $ the positivity is obvious.}%
\[
\kappa_{\pm}\left(  \alpha\right)  =\frac{12}{\alpha^{2}}\frac{2+7\alpha
+3\alpha^{2}-3\alpha^{3}\pm\sqrt{3\alpha^{6}+3(5-4\alpha)\alpha^{4}%
+(37-30\alpha)\alpha^{2}+28\alpha+4}}{2+6\alpha-\alpha^{2}-\alpha^{3}},
\]
while the imaginary parts of the roots of $\eta_{3}$ are always non-zero for
$\alpha\in\left]  0,1\right[  $. We do not investigate the roots of $\eta_{4}$
and $\eta_{5}$. It follows that the conforming Galerkin discretization of the
transverse magnetic problem in 2D by Raviart-Thomas finite elements
$\mathbf{RT}_{1}\left(  {\mathcal{T}}_{2}\right)  $ has a \textit{spurious
mode} for any $\alpha\in\left]  0,1\right[  $ and $k^{2}=\kappa_{\pm}\left(
\alpha\right)  $.

\subsection{Maxwell 3D, $\mathbf{N}_{p}^{\operatorname*{I}}$ for p=1:}

For any $\alpha\in\left]  0,1\right[  $ and $p=1$ symbolic manipulation by the
\textsc{Mathematica} implementation shows that $Q_{\alpha}\left(
\kappa\right)  $ for this problem can be factorized into nine polynomials
$\eta_{1}\left(  \kappa\right)  =\kappa^{10}$, $\eta_{2}\in\mathbb{P}_{1}$,
$\eta_{3},\eta_{4}\in\mathbb{P}_{2}$, $\eta_{5}\in\mathbb{P}_{3}$, $\eta
_{6},\eta_{7},\eta_{8}\in\mathbb{P}_{4}$, $\eta_{9}\in\mathbb{P}_{8}:$
$Q_{\alpha}\left(  \kappa\right)  =\frac{\eta_{1}\eta_{2}^{2}\eta_{3}\eta
_{4}^{2}\eta_{5}^{2}\eta_{6}\eta_{7}\eta_{8}\eta_{9}^{2}}{f_{\alpha}}$ with a
positive polynomial $f_{\alpha}$ for all $\alpha\in\left]  0,1\right[  $.
Clearly, the only root of $\eta_{1}$ is $\kappa=0$ which is not relevant for
critical wavenumbers $k\in\mathbb{R}\backslash\left\{  0\right\}  $; $\eta
_{2}$ has a positive root%
\[
\kappa\left(  \alpha\right)  =20\frac{2-\alpha}{1+\alpha\left(  1-\alpha
\right)  },
\]
while the imaginary parts of the roots of $\eta_{3},\eta_{4}$ are always
non-zero for $\alpha\in\left]  0,1\right[  $. We do not investigate the roots
of $\eta_{6},\ldots,\eta_{9}$.

The polynomial $\eta_{5}$ requires a more subtle analysis. Define coefficients%
\begin{align}
c_{0}\left(  \alpha\right)   &  :=1024000\left(  2\alpha^{2}+\alpha+2\right)
,\label{defcalpha}\\
c_{1}\left(  \alpha\right)   &  :=-1600\left(  \alpha^{3}\left(
60-19\alpha\right)  +47\alpha^{2}+136\alpha+36\right)  ,\nonumber\\
c_{2}\left(  \alpha\right)   &  :=20\left(  \alpha^{3}\left(  222-35\alpha
^{3}\right)  +\left(  16-15\alpha^{5}\right)  +12\alpha^{4}+188\alpha
^{2}+112\alpha\right) \nonumber\\
c_{3}\left(  \alpha\right)   &  :=-\left(  \alpha^{4}\left(  26-21\alpha
\right)  +\alpha^{2}\left(  41-31\alpha\right)  +52\alpha+8\right)  \alpha^{2}
\label{defcalphaend}%
\end{align}
and the polynomial%
\begin{equation}
p_{\alpha}\left(  x\right)  =\sum_{\ell=0}^{3}c_{\ell}\left(  \alpha\right)
x^{\ell}. \label{defpalpha}%
\end{equation}

All three roots of the polynomial $p_{\alpha}$ are real, positive, and
distinct. Since the proof is somehow tedious but elementary we have postponed
it to Appendix \ref{ApTech}. Here we depict the graph of the discriminant:%
\[
\Delta:=-18c_{0}c_{1}c_{2}c_{3}+4c_{2}^{3}c_{0}-c_{1}^{2}c_{2}^{2}+4c_{3}%
c_{1}^{3}+27c_{3}^{2}c_{0}^{2}%
\]
motivated by the following reasoning: It holds (see, e.g., \cite[Chap. IX,
p.122]{turnball_theory_eqs})%
\[
\Delta\left(  \alpha\right)  <0\iff\text{all three roots of }p_{\alpha}\text{
are distinct and real.}%
\]
Note that the coefficients $c_{\ell}\left(  \alpha\right)  $ in $p_{\alpha
}\left(  x\right)  $ satisfy sign properties which directly follow from their
representations (\ref{defcalpha})-(\ref{defcalphaend}): for $\alpha\in\left[
0,1\right]  $ it holds:%
\[
c_{0}\left(  \alpha\right)  >0,\quad c_{1}\left(  \alpha\right)  <0,\quad
c_{2}\left(  \alpha\right)  >0,\quad c_{3}\left(  \alpha\right)  <0
\]
and $p_{\alpha}\left(  x\right)  >0$ for all $x\leq0$ follows. Consequently:
if $\Delta<0$ in $\left]  0,1\right[  $ then all three roots of $p_{\alpha}$
are distinct, real, and positive.\ The graph of $\Delta$ is shown in Figure
\ref{FigDiscr}.%
%TCIMACRO{\FRAME{ftbpFU}{3in}{2.0003in}{0pt}{\Qcb{Discriminant $\Delta$ of the
%cubic polynomial $\eta_{5}$ as a function of $\alpha\in\left[  0,1\right]  $%
%.}}{\Qlb{FigDiscr}}{discriminantswp.eps}%
%{\special{ language "Scientific Word";  type "GRAPHIC";
%maintain-aspect-ratio TRUE;  display "USEDEF";  valid_file "F";  width 3in;
%height 2.0003in;  depth 0pt;  original-width 0pt;  original-height 0pt;
%cropleft "0";  croptop "1";  cropright "1";  cropbottom "0";
%filename 'discriminantswp.eps';file-properties "XNPEU";}} }%
%BeginExpansion
\begin{figure}[ptb]%
\centering
\includegraphics[
height=2.0003in,
width=3in
]%
{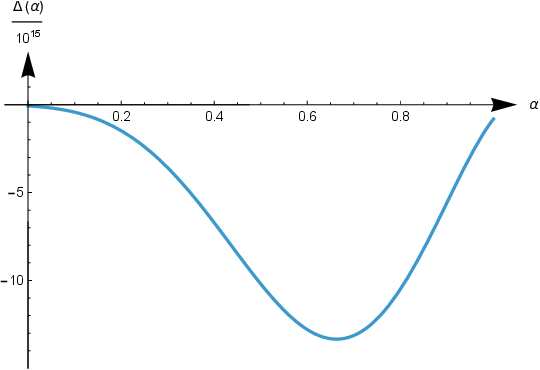}%
\caption{Discriminant $\Delta$ of the cubic polynomial $\eta_{5}$ as a
function of $\alpha\in\left[  0,1\right]  $.}%
\label{FigDiscr}%
\end{figure}
%EndExpansion

{In summary, t}he conforming Galerkin discretization by the N\'{e}d\'{e}lec
space $\mathbf{N}_{1}^{\operatorname*{I}}\left(  {\mathcal{T}}_{3}\right)  $
has eight spurious wavenumbers%
\[
k\left(  \alpha\right)  \in\left\{  \pm\sqrt{20\frac{2-\alpha}{1+\alpha\left(
1-\alpha\right)  }},\pm\sqrt{x_{1}\left(  \alpha\right)  },\pm\sqrt
{x_{2}\left(  \alpha\right)  },\pm\sqrt{x_{3}\left(  \alpha\right)  }\right\}
,
\]
where $x_{m}\left(  \alpha\right)  $, $1\leq m\leq3$, are the three distinct
and positive roots of the equation
\begin{equation}
p_{\alpha}\left(  x\right)  =0. \label{cubiceq}%
\end{equation}

\section{Conclusion\label{SecConcl}}

In this paper we considered three common models in two and three spatial
dimensions for high-frequency scattering problems, all of which are well posed
on the continuous level. We constructed simplicial meshes for popular,
low-order, conforming Galerkin finite element discretizations for these
variational problems such that the discrete system is singular for certain
discrete values of $k$. These meshes are highly symmetric and for all
constructed examples, the number of degrees of freedom is relatively low on
the domain boundary.

Even for these specially constructed meshes the system matrix is singular only
{for} finitely many spurious frequencies $k>0$ indicating that these cases are
very rare events.

Next, we comment on the resolution condition applied to periodic copies of the
Bernkopf meshes. Let $N\in\mathbb{N}$ and $h=1/\left(  N+1\right)  $. Let
$\mathcal{T}_{3}^{+}$ denote some conforming tetrahedral mesh of $\left(
-1,1\right)  ^{3}$ which a) contains $\mathcal{T}_{3}$ as a submesh and b) the
four congruent triangles, which arise by inserting the two diagonals into each
facet of the cube $\left(  -1,1\right)  ^{3}$, are facets of some tetrahedra
in $\mathcal{T}_{3}^{+}$. For a uniform notation we set $\mathcal{T}_{2}%
^{+}:=\mathcal{T}_{2}$.

A scaling of the mesh $\mathcal{T}_{d}^{+}$, $d=2,3$, by a factor $h/2$ leads
to a mesh $\mathcal{T}_{d,h}^{+}$ of $\Omega_{h}:=\left(  -\frac{h}{2}%
,\frac{h}{2}\right)  ^{d}$ which contains the scaled version $\mathcal{T}%
_{d,h}$ of $\mathcal{T}_{d}$ as a submesh. The translation $\Phi_{%
%TCIMACRO{\TeXButton{boldmue}{\boldsymbol{\mu}}}%
%BeginExpansion
\boldsymbol{\mu}%
%EndExpansion
}:\Omega_{h}\rightarrow\Omega_{h}\left(
%TCIMACRO{\TeXButton{boldmue}{\boldsymbol{\mu}}}%
%BeginExpansion
\boldsymbol{\mu}%
%EndExpansion
\right)  $ is given for
\[%
%TCIMACRO{\TeXButton{boldmue}{\boldsymbol{\mu}}}%
%BeginExpansion
\boldsymbol{\mu}%
%EndExpansion
\in\mathcal{J}_{N}:=\left\{
%TCIMACRO{\TeXButton{boldnue}{\boldsymbol{\nu}}}%
%BeginExpansion
\boldsymbol{\nu}%
%EndExpansion
=\left(  \nu_{j}\right)  _{j=1}^{d}\in\mathbb{N}_{0}^{d}\mid\left\Vert
%TCIMACRO{\TeXButton{boldmue}{\boldsymbol{\mu}}}%
%BeginExpansion
\boldsymbol{\mu}%
%EndExpansion
\right\Vert _{\max}\leq N\right\}  \quad\text{and\quad}\mathbf{1}=\left(
1\right)  _{i=1}^{d}%
\]
by%
\[
\Phi_{%
%TCIMACRO{\TeXButton{boldmue}{\boldsymbol{\mu}}}%
%BeginExpansion
\boldsymbol{\mu}%
%EndExpansion
}\left(  \mathbf{x}\right)  =\mathbf{x}+h\left(
%TCIMACRO{\TeXButton{boldmue}{\boldsymbol{\mu}}}%
%BeginExpansion
\boldsymbol{\mu}%
%EndExpansion
+\frac{1}{2}\mathbf{1}\right)  .
\]
The simplices in $\Omega_{h}\left(
%TCIMACRO{\TeXButton{boldmue}{\boldsymbol{\mu}}}%
%BeginExpansion
\boldsymbol{\mu}%
%EndExpansion
\right)  $, $%
%TCIMACRO{\TeXButton{boldmue}{\boldsymbol{\mu}}}%
%BeginExpansion
\boldsymbol{\mu}%
%EndExpansion
\in\mathcal{J}_{N}$, define a conforming simplicial mesh for $\left(
0,1\right)  ^{d}$ where each cell $\Omega_{h}\left(
%TCIMACRO{\TeXButton{boldmue}{\boldsymbol{\mu}}}%
%BeginExpansion
\boldsymbol{\mu}%
%EndExpansion
\right)  $ contains a copy of the scaled structured mesh $\mathcal{T}_{d,h}$.
By a scaling argument (which is detailed in \cite[Lem. 3.3]%
{bernkopf2021solvability}) one obtains that there are non-trivial homogeneous
solutions of the discrete problem which are localized in one of the cells if%
\[
kh=k\left(  \alpha\right)
\]
for the spurious wavenumbers $k\left(  \alpha\right)  $ defined for the
Galerkin discretization of our model problems on the constructed meshes. On
the other hand, this observation can also be formulated in a reversed way:
since $k\left(  \alpha\right)  $ is independent of $h$ a condition
\[
kh<k\left(  \alpha\right)
\]
ensures that no non-trivial discrete homogenous solutions exist which are
localizing in one cell. Such a condition is substantially weaker than a
typical condition \textquotedblleft for some fixed $\nu>1$, the product
$k^{\nu}h$ must be sufficiently small\textquotedblright\ as it appears, e.g.,
with $\nu=2$ in (\ref{adapproxMW2D}).

\appendix

\section{Analysis of the roots of $p_{\alpha}$ in (\ref{defpalpha}%
)\label{ApTech}}

In this appendix we prove the following lemma.

\begin{lemma}
\label{Lemrootprop}The cubic polynomial $p_{\alpha}$ in (\ref{defpalpha}) has
three distinct positive roots for any $\alpha\in\left[  0,1\right]  $.
\end{lemma}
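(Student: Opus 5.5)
Since $c_0(\alpha)>0$, $c_1(\alpha)<0$, $c_2(\alpha)>0$ and $c_3(\alpha)<0$ for all $\alpha\in[0,1]$ (as noted before (\ref{defpalpha})), the polynomial $p_\alpha$ has no root in $(-\infty,0]$. Indeed, for $x\le 0$ every term $c_\ell(\alpha)x^\ell$ is nonnegative and the constant term is positive. So positivity comes for free, and it suffices to show that $p_\alpha$ has three distinct real roots. I would avoid the discriminant $\Delta$, which is a very high degree polynomial in $\alpha$ and hard to control by hand. Instead I would use a sign-change (intermediate value) argument. I will exhibit explicit points $0<x_a(\alpha)<x_b(\alpha)<x_c(\alpha)$ with
\[
p_\alpha(x_a)<0,\qquad p_\alpha(x_b)>0,\qquad p_\alpha(x_c)<0 .
\]
Together with $p_\alpha(0)=c_0>0$, this gives four alternating signs on $[0,x_c]$, hence three distinct roots in $(0,x_c)$. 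For a cubic these are all of its roots.

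The coefficients suggest the scaling. One has $c_0\sim 10^6$, $c_1\sim 10^5$, $c_2\sim 10^3$ and $c_3\sim 10^1$ (with $c_3(0)=0$ and $c_3\sim\alpha^2$). So the roots should behave like $x=20y$ with $y=O(1)$, and the roots of order $1/\alpha^2$ will need separate care. I would substitute $x=20y$ and divide by $1600\cdot 20$, or a similar factor, to obtain a normalized cubic $q_\alpha(y)$ with coefficients of moderate size. I would first evaluate at $\alpha=0$ and $\alpha=1$ to locate the roots numerically. At $\alpha=0$ we have $c_3=0$, so $p_0$ is only quadratic and the largest root escapes to $+\infty$ as $\alpha\to 0$. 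Therefore the third sign change must be taken at a point $x_c(\alpha)$ of order $\alpha^{-2}$, for instance $x_c=M/\alpha^2$, or one should use the sign of the leading coefficient for large $x$.

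At $\alpha=0$ the cubic degenerates, so the statement there must be read as three roots counting the one at infinity. Alternatively, I would check that the two finite roots of the quadratic $c_0+c_1x+c_2x^2$ are real, positive and distinct, and handle $\alpha=0$ separately. For $\alpha\in(0,1]$ the argument is as follows. Choose rational test points $y_a$ and $y_b$, possibly piecewise constant in $\alpha$ on a few subintervals, that bracket the two small roots. Then show $q_\alpha(y_a)<0$ and $q_\alpha(y_b)>0$ for all $\alpha$ in the subinterval. Each of these is a polynomial inequality in $\alpha$ on a compact interval, which I would verify by grouping terms with the same sign or by a Bernstein expansion on $[0,1]$. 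Finally, the large root follows without a test point: $c_3(\alpha)<0$ for $\alpha>0$, so $p_\alpha(x)\to-\infty$ as $x\to\infty$, and since $p_\alpha(x_b)>0$ there is a root beyond $x_b$.

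The main obstacle is choosing $x_a$ and $x_b$ uniformly in $\alpha$ so that the resulting polynomial inequalities in $\alpha$ hold with a clean margin. It may be necessary to split $[0,1]$ into two or three subintervals, or to let the test points depend linearly on $\alpha$, for example $x_a=x_a^0+\alpha x_a^1$. I would first compute the two small roots numerically at a few values of $\alpha$ to see how much they move. I would then pick the simplest rational separators, such as the midpoint between the two small roots for $x_b$ and a point just past the smallest root for $x_a$, and reduce everything to checking the signs of explicit polynomials in $\alpha$.
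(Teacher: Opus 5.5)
Your strategy is the paper's. You get positivity of the roots from the alternating signs of $c_0,\dots,c_3$. You get three distinct roots from a sign sequence $+,-,+,-$ taken at $0$, at two test points, and at $+\infty$ via $c_3<0$. What you have not yet supplied is the actual content of the lemma: the test points themselves and the verification of the resulting polynomial inequalities in $\alpha$.

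The paper's choices are as follows.
\begin{itemize}
\item It takes $x_a=50$ for all $\alpha$.
\item It takes $x_b=150$ on $[0,\tfrac12]$ and $x_b=105-50\alpha$ on $[\tfrac12,1]$, an $\alpha$-dependent choice of the kind you anticipated.
\item Each of $p_\alpha(50)<0$ and $p_\alpha(x_b)>0$ becomes positivity of an explicit polynomial in $\alpha$ or in $b=1-\alpha$. The paper proves this by writing the polynomial as manifestly nonnegative blocks plus a positive constant, and checks each low-degree block by elementary calculus (locating its critical points).
\end{itemize}
No single constant $x_b$ can work on all of $[0,1]$. For small $\alpha$, $x_b$ must exceed the middle root, which is about $131.2$. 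At $\alpha=1$, however, the two largest roots nearly coalesce (about $53.3$ and $57.9$), so $x_b$ must lie in that narrow window.

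Two corrections to your heuristics.

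First, since $p_\alpha(0)>0$, $p_\alpha$ is negative between its two smallest roots. So the midpoint of the two bounded roots is a candidate for $x_a$, not for $x_b$. Your point $x_b$ must separate the middle root from the one root that escapes to $+\infty$ as $\alpha\to0$.

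Second, your observation about $\alpha=0$ is correct, and the paper's proof overlooks it. There $c_3(0)=0$, so the claim $\operatorname{sign}p_\alpha(+\infty)=-1$ fails. Indeed $p_0(x)=320(x^2-180x+6400)$ has exactly the two roots $90\pm\sqrt{1700}$. Rather than counting a root at infinity, the right fix is to state the lemma for $\alpha\in(0,1]$, which is all the main text uses.
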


%

%TCIMACRO{\TeXButton{Proof}{\proof}}%
%BeginExpansion
\proof
%EndExpansion
The coefficients $c_{m}\left(  \alpha\right)  $ in $p_{\alpha}\left(
x\right)  =\sum_{\ell=0}^{3}c_{\ell}\left(  \alpha\right)  x^{\alpha}$ satisfy
the following sign properties which directly follow from their representations
(\ref{defcalpha})-(\ref{defcalphaend}): for $\alpha\in\left[  0,1\right]  $ it
holds:%
\[
c_{0}\left(  \alpha\right)  >0,\quad c_{1}\left(  \alpha\right)  <0,\quad
c_{2}\left(  \alpha\right)  >0,\quad c_{3}\left(  \alpha\right)  <0.
\]

This implies
\begin{align}
\operatorname*{sign}p_{\alpha}\left(  0\right)   &  =\operatorname*{sign}%
c_{0}\left(  \alpha\right)  =1,\label{sign1}\\
\operatorname*{sign}p_{\alpha}\left(  +\infty\right)   &
=\operatorname*{sign}c_{3}\left(  \alpha\right)  =-1. \label{sign2}%
\end{align}

Next we prove that
\begin{equation}
\operatorname*{sign}p_{\alpha}\left(  50\right)  =-1. \label{sign3}%
\end{equation}

\textbf{Case 1. }$\alpha\in\left[  \frac{1}{2},1\right]  $ and we set
$b=1-\alpha\in\left[  0,\frac{1}{2}\right]  $. For $x=50$ it holds $p_{\alpha
}\left(  50\right)  =-\frac{1000}{\alpha^{7}}w\left(  b\right)  $ for%

\begin{equation}
w\left(  b\right)  =13375b^{5}\left(  1-b\right)  +2625b^{7}+b^{2}\tilde
{w}_{3}\left(  b\right)  +500b+55 \label{wofb}%
\end{equation}
with%
\[
\tilde{w}_{3}\left(  b\right)  :=14\,875b^{3}-29495b^{2}+10905b+567.
\]
Next we show $\tilde{w}_{3}\left(  b\right)  \geq0$ in the range $b\in\left[
0,\frac{1}{2}\right]  $, i.e., $\alpha\in\left[  \frac{1}{2},1\right]  $. A
straightforward discussion of $\tilde{w}_{3}$ shows that it has a local
minimum at $\frac{347}{525}+\frac{2}{8925}\sqrt{3833\,194}>1$, i.e., outside
the interval $\left[  0,\frac{1}{2}\right]  $. Hence, $\tilde{w}_{3}$ has no
local minimum in this interval and%
\[
\min_{0\leq b\leq\frac{1}{2}}\tilde{w}_{3}\left(  b\right)  =\min\left\{
\tilde{w}_{3}\left(  0\right)  ,\tilde{w}_{3}\left(  \frac{1}{2}\right)
\right\}  =\frac{4041}{8}>0.
\]
This directly implies $w\left(  b\right)  \geq55$ and, in turn, $p_{\alpha
}\left(  50\right)  <0$ for $\alpha\in\left[  \frac{1}{2},1\right]  $.

\textbf{Case 2.} $\alpha\in\left[  0,\frac{1}{2}\right]  $. We write
$p_{\alpha}\left(  50\right)  =-\frac{1000}{\alpha^{7}}v\left(  \alpha\right)
$ for%

\begin{equation}
v\left(  \alpha\right)  :=2625\alpha^{6}\left(  1-\alpha\right)  +\alpha
^{4}v_{2}\left(  \alpha\right)  +\alpha v_{3}\left(  \alpha\right)  +32
\label{defvalpha}%
\end{equation}
with%
\[
v_{2}\left(  \alpha\right)  :=2375\alpha^{2}-3125\alpha+3005\quad
\text{and\quad}v_{3}\left(  \alpha\right)  :=200\alpha^{2}-6688\alpha+4256.
\]
Again, a straightforward discussion yields, that $v_{2}$ has a local minimum
at $\frac{25}{38}$ with value $\frac{150\,255}{76}>0$ and hence is positive
for all $\alpha\in\mathbb{R}$. The function $v_{3}$ has a local minimum at
$\frac{418}{25}>1$. Hence there is no local minimum in $\left[  0,\frac{1}%
{2}\right]  $ and%
\[
\min_{\alpha\in\left[  0,\frac{1}{2}\right]  }v_{3}\left(  \alpha\right)
=\min\left\{  v_{3}\left(  0\right)  ,v_{3}\left(  \frac{1}{2}\right)
\right\}  =962.
\]
We use these positivity estimates in (\ref{defvalpha}) to get%
\[
\min_{\alpha\in\left[  0,\frac{1}{2}\right]  }v\left(  \alpha\right)  \geq32.
\]
In turn we have proved that
\[
\forall\alpha\in\left[  0,1\right]  \text{\qquad}p_{\alpha}\left(  50\right)
<0.
\]

Finally we prove
\begin{equation}
\forall\alpha\in\left[  0,1\right]  \text{\quad}\exists y_{\alpha}%
>50\quad\operatorname*{sign}p_{\alpha}\left(  y_{\alpha}\right)  =1
\label{sign4}%
\end{equation}
and distinguish between two cases:

\textbf{Case 1. }For $\alpha\in\left[  0,\frac{1}{2}\right]  $ we prove that
$p_{\alpha}\left(  150\right)  >0$. It holds $p_{\alpha}\left(  150\right)
=\frac{1000}{\alpha^{7}}z\left(  \alpha\right)  $ with%
\begin{equation}
z\left(  \alpha\right)  :=\alpha^{5}z_{2}\left(  \alpha\right)  +\alpha
z_{3}\left(  \alpha\right)  +608 \label{defzalpha}%
\end{equation}
with%
\begin{align*}
z_{2}\left(  \alpha\right)   &  :=70875\alpha^{2}-103500\alpha+97875,\\
z_{3}\left(  \alpha\right)   &  :=-128415\alpha^{3}-90000\alpha^{2}%
+48368\alpha+18784.
\end{align*}
A straightforward analysis yields that $z_{2}$ has a local minimum at
$\alpha=\frac{46}{63}$ with positive value $\frac{420\,625}{7}$ and hence
$z_{2}\left(  \alpha\right)  >0$ for all $\alpha\in\mathbb{R}$. A
straightforward discussion of the function $z_{3}$ shows that it has a local
minimum at $\alpha=-\frac{2000}{8561}-\frac{4}{128\,415}\sqrt{185\,649\,515}%
<0$. Consequently if has no local minimum in $\left[  0,\frac{1}{2}\right]  $
and%
\[
\min_{\alpha\in\left[  0,1/2\right]  }z_{3}\left(  \alpha\right)
=\min\left\{  z_{3}\left(  0\right)  ,z_{3}\left(  \frac{1}{2}\right)
\right\}  =\frac{35\,329}{8}>0.
\]
From (\ref{defzalpha}) it follows $z\left(  \alpha\right)  \geq608$ for all
$\alpha\in\left[  0,1/2\right]  $ and in turn $p_{\alpha}\left(  150\right)
>0$ in this range.

\textbf{Case 2.} For $\alpha\in\left[  \frac{1}{2},1\right]  $ we prove that
$p_{\alpha}\left(  y_{\alpha}\right)  >0$ for $y_{\alpha}=105-50\alpha$. Note
that $y_{\alpha}>50$ in this range. We obtain $p_{\alpha}\left(
105-50\alpha\right)  =\frac{125}{\alpha^{7}}\zeta\left(  b\right)  $ for
$b=1-\alpha\in\left[  0,\frac{1}{2}\right]  $ and%
\begin{equation}
\zeta\left(  b\right)  =100b^{9}\zeta_{1}\left(  b\right)  +b^{5}\zeta
_{3}\left(  b\right)  +b\xi_{3}\left(  b\right)  +95 \label{formulazetab}%
\end{equation}
with%
\begin{align*}
\zeta_{1}\left(  b\right)   &  :=517-210b,\\
\zeta_{3}\left(  b\right)   &  :=12\,494+165\,531b-113\,321b^{2}-6930b^{3},\\
\xi_{3}\left(  b\right)   &  :=4588+42\,097b+14\,067b^{2}-153\,097b^{3}.
\end{align*}
Clearly, the first factor $\zeta_{1}$ is positive for $b\in\left[  0,\frac
{1}{2}\right]  $. The function $\zeta_{3}\left(  b\right)  $ has a local
minimum at $\alpha=-\frac{1}{20\,790}\sqrt{16283\,038\,531}-\frac
{113\,321}{20\,790}<0$. Hence, there is no local minimum in $\left[
0,\frac{1}{2}\right]  $ and%
\[
\min_{b\in\left[  0,1/2\right]  }\zeta_{3}\left(  b\right)  =\min\left\{
\zeta_{3}\left(  0\right)  ,\zeta_{3}\left(  \frac{1}{2}\right)  \right\}
=12\,494.
\]
The function $\xi_{3}$ has a local minimum at $\frac{4689}{153\,097}-\frac
{2}{459\,291}\sqrt{4883\,163\,429}<0$. Again there is no local minimum in
$\left[  0,\frac{1}{2}\right]  $ and%
\[
\min_{b\in\left[  0,1/2\right]  }\xi_{3}\left(  b\right)  =\min\left\{
\xi_{3}\left(  0\right)  ,\xi_{3}\left(  \frac{1}{2}\right)  \right\}  =4588.
\]
From (\ref{formulazetab}) it follows that $\min_{0\leq b\leq1/2}\zeta\left(
b\right)  \geq95$ and in turn $p_{\alpha}\left(  105-50\alpha\right)  >0$ for
any $\alpha\in\left[  0,1/2\right]  $.

By using the meanvalue theorem the sign properties (\ref{sign1}),
(\ref{sign2}), (\ref{sign3}), (\ref{sign4}) of $p_{\alpha}$ imply that all
three roots of $p_{\alpha}$ are real, positive and distinct.%
%TCIMACRO{\TeXButton{End Proof}{\endproof}}%
%BeginExpansion
\endproof
%EndExpansion

\bibliographystyle{abbrv}
\bibliography{acompat,nlailu,theo}

\end{document}